\newtheorem{theorem}{Theorem}[section]
\newtheorem{lemma}[theorem]{Lemma}
\newtheorem{corollary}[theorem]{Corollary}
\newtheorem{proposition}[theorem]{Proposition}
\newtheorem{question}[theorem]{Question}
\newtheorem{claim}[theorem]{Claim}
\theoremstyle{definition}
\newtheorem{definition}[theorem]{Definition}
\theoremstyle{remark}
\newtheorem{remark}[theorem]{Remark}
\newbox\gnBoxA
\newdimen\gnCornerHgt
\newdimen\gnArgHgt
\def\gnmb #1{%
\setbox\gnBoxA=\hbox{$#1$}%
\gnArgHgt=\ht\gnBoxA%
\ifnum     \gnArgHgt<\gnCornerHgt \gnArgHgt=0pt%
\else \advance \gnArgHgt by -\gnCornerHgt%
\fi \raise\gnArgHgt\hbox{$\ulcorner$} \box\gnBoxA %
\raise\gnArgHgt\hbox{$\urcorner$}}
\title{On the inevitability of the consistency operator}
\author{James Walsh}
\address{Group in Logic and the Methodology of Science, University of California, Berkeley}
\email{walsh@math.berkeley.edu}
\author{Antonio Montalb{\'a}n}
\address{Department of Mathematics, University of California, Berkeley}
\email{antonio@math.berkeley.edu}
\thanks{2010 \emph{Mathematics Subject Classification}. Primary 03F40.}
\begin{document}

\maketitle

\begin{abstract}
We examine recursive monotonic functions on the Lindenbaum algebra of $\mathsf{EA}$. We prove that no such function sends every consistent $\varphi$ to a sentence with deductive strength strictly between $\varphi$ and $(\varphi\wedge\mathsf{Con}(\varphi))$. We generalize this result to iterates of consistency into the effective transfinite. We then prove that for any recursive monotonic function $f$, if there is an iterate of $\mathsf{Con}$ that bounds $f$ everywhere, then $f$ must be somewhere equal to an iterate of $\mathsf{Con}$.
\end{abstract}

\section{Introduction}

It is a well-known empirical phenomenon that natural axiomatic theories are well-ordered by their consistency strength. However, without a precise mathematical definition of ``natural,'' it is difficult to explain this observation in a strictly mathematical way. One expression of this phenomenon comes from \emph{ordinal analysis}, a research program whereby recursive ordinals are assigned to theories as a measurement of their consistency strength. One method for calculating the proof-theoretic ordinal of a theory $T$ involves demonstrating that $T$ can be approximated over a weak base theory by a class of formulas that are well understood. In particular, the $\Pi^0_1$ fragments of natural theories are often proof-theoretically equivalent to iterated consistency statements over a weak base theory, making these theories amenable to ordinal analysis. For discussion, see, e.g., Beklemishev \cite{beklemishev2003proof,beklemishev2004provability} and Joosten \cite{joosten2016turing}. 

Why are the $\Pi^0_1$ fragments of natural theories proof-theoretically equivalent to iterated consistency statements? Our approach to this question is inspired by Martin's approach to another famous question from mathematical logic: why are natural Turing degrees well-ordered by Turing reducibility? Martin conjectured that (i) the non-constant degree invariant functions meeting a certain simplicity condition ($f\in L(\mathbb{R})$)\footnote{Martin's Conjecture is stated under the hypothesis $\mathsf{ZF}+\mathsf{AD}+\mathsf{DC}$, which is satisfied by $L(\mathbb{R})$ assuming that there are $\omega$ many Woodin cardinals with a measurable above them all.} are pre-well-ordered by the relation ``$f(a)\leq_Tg(a)$ on a cone in the Turing degrees'' and (ii) the successor for this well-ordering is induced by the Turing jump. Martin's conjecture is meant to capture the idea that the Turing jump and its iterates into the transfinite are the only natural non-trivial degree invariant functions.

In this paper we investigate analogous hypotheses concerning jumps on consistent axiomatic theories, namely, consistency statements. We fix elementary arithmetic $\mathsf{EA}$ as our base theory. $\mathsf{EA}$ is a subsystem of $\mathsf{PA}$ that is often used as a base theory in ordinal analysis and in which standard approaches to arithmetization of syntax can be carried out without substantial changes; see \cite{beklemishev2005reflection} for details. We write $[\varphi]$ to denote the equivalence class of $\varphi$ modulo $\mathsf{EA}$-provable equivalence. We write $\varphi\vdash\psi$ if $\mathsf{EA}\vdash\varphi\rightarrow\psi$ and say that $\varphi$ \emph{implies} $\psi$. If $\varphi\vdash\psi$ but $\psi\nvdash\varphi$ we say that $\varphi$ \emph{strictly implies} $\psi$. The \emph{Lindenbaum algebra} of $\mathsf{EA}$ is the set of equivalence classes of sentences ordered by $\vdash$. We focus on recursive functions $f$ that are \emph{monotonic}, i.e.,
$$\textrm{if } \varphi\vdash\psi \textrm{, then }f(\varphi)\vdash f(\psi).$$ 
We note that (i) a function $f$ is monotonic just in case $f$ preserves implication over $\mathsf{EA}$ and (ii) all monotonic functions induce functions on the Lindenbaum algebra of $\mathsf{EA}$. We adopt the convention that all functions named ``$f$'' in this paper are recursive.

Our goal is to demonstrate that $\varphi\mapsto(\varphi\wedge \mathsf{Con}(\varphi))$ and its iterates into the transfinite are canonical among monotonic functions. Our first theorem to this end is the following.

\begin{theorem}
\label{aaa}
Let $f$ be monotonic. Suppose that for all consistent $\varphi$,\\
(i) $\varphi\wedge \mathsf{Con}(\varphi)$ implies $f(\varphi)$ and\\
(ii) $f(\varphi)$ strictly implies $\varphi$. \\
Then for every true $\varphi$, there is a true $\psi$ such that $\psi\vdash\varphi$ and $[f(\psi)] = [\psi\wedge \mathsf{Con}(\psi)].$
\end{theorem}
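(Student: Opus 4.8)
The plan is to argue by contradiction: suppose $\varphi$ is true but for every true $\psi$ with $\psi\vdash\varphi$ we have $[f(\psi)]\ne[\psi\wedge\mathsf{Con}(\psi)]$. Combined with hypothesis (i), this means $f(\psi)$ is strictly implied by $\psi\wedge\mathsf{Con}(\psi)$ whenever $\psi$ is true and $\psi\vdash\varphi$. The idea is to use this failure to run a recursion-theoretic diagnonalization — a self-referential construction via the fixed point lemma — producing a sentence $\psi$ that is true, implies $\varphi$, yet whose truth value about its own consistency is miscalibrated by $f$, yielding a contradiction. Since $f$ is recursive and monotonic and $\mathsf{Con}(\cdot)$, $\vdash$ are all arithmetically definable, such a $\psi$ can be built by the fixed point lemma.

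First I would set up the fixed point: by the diagonal lemma there is a sentence $\psi$ asserting, roughly, ``$\varphi$ holds and $f(\psi)$ is \emph{not} implied by $\psi\wedge\mathsf{Con}(\psi)$'' — more precisely, one encodes a search for a proof witnessing $f(\psi)\nvdash$ some consequence, or else one threads $\psi$ through a Rosser-style construction balancing provability of $f(\psi)$ against provability of its negation. The key point will be to choose the self-reference so that: (a) $\psi\vdash\varphi$ is immediate from the syntactic form of $\psi$; (b) if $\psi$ is true then the ``defect'' it asserts actually occurs, so $f(\psi)$ sits strictly below $\psi\wedge\mathsf{Con}(\psi)$; and (c) this strict gap, together with monotonicity applied along the implications $\psi\wedge\mathsf{Con}(\psi)\vdash\psi\vdash\varphi$, forces $f$ to behave in a way incompatible with (ii) or with the assumed non-equality. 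One then checks that $\psi$ must in fact be true (if it were false, $\mathsf{EA}$-provably so or by $\Sigma_1$-completeness the relevant search terminates, collapsing the gap and contradicting the standing assumption), completing the contradiction.

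The main obstacle I expect is arranging the self-reference so that the monotonicity of $f$ can actually be \emph{leveraged}: monotonicity only gives us $f(\varphi)\vdash f(\psi)$ from $\varphi\vdash\psi$, so to get a contradiction we need to manufacture two sentences $\psi_0\vdash\psi_1$ on which we simultaneously know an upper bound on $f(\psi_0)$ (from (i)) and a lower bound on $f(\psi_1)$ (from (ii), namely $f(\psi_1)\vdash\psi_1$) that are incompatible once squeezed through $f$ and the gap asserted by the fixed point. Concretely, I anticipate needing a pair where $\psi_0$ is (provably) equivalent to $\psi_1\wedge\mathsf{Con}(\psi_1)$ or to some consistency statement relative to $\psi_1$, so that the strict implication $f(\psi_0)\vartriangleleft\psi_0$ from (ii) contradicts $f(\psi_0)\vdash f(\psi_1)\vdash\psi_1$ together with the gap. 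Making the fixed point simultaneously control $\psi_0$, $\psi_1$, and their provability relationships — while keeping everything recursive and ensuring the constructed sentences are true rather than merely consistent — is the delicate step; I would likely isolate it as a separate lemma (perhaps the one attributed to Harrison-Trainor) and cite it here.
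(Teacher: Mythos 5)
Your plan is only a plan: the ``delicate step'' you flag at the end---actually constructing the sentence that makes monotonicity bite---is exactly the content of the theorem, and you do not close it. Moreover, the pointer to Harrison--Trainor is a red herring: that lemma (Proposition \ref{eee}) concerns an r.e.\ set avoiding true ideals, not this theorem.

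The paper's proof is direct (no contradiction, no G\"odel fixed point) and hinges on two devices you have not found. First, hypothesis (ii) is \emph{internalized} as a single true sentence
$$\chi:=\forall\zeta\bigl(\mathsf{Con}(\zeta)\rightarrow(\zeta\nvdash f(\zeta))\bigr),$$
and one sets $\psi:=\varphi\wedge\chi$ (true, and implies $\varphi$). Second, and this is the crucial step, the witness is \emph{not} self-referential: it is the conditional sentence
$$\theta:=\psi\wedge\bigl(f(\psi)\rightarrow\mathsf{Con}(\psi)\bigr).$$
Then $f(\theta)\vdash\theta\wedge f(\psi)$ (monotonicity plus (ii)), and $\theta\wedge f(\psi)\vdash\psi\wedge\mathsf{Con}(\psi)$ (modus ponens on the conditional inside $\theta$). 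Finally $\psi\wedge\mathsf{Con}(\psi)\vdash\theta\wedge\mathsf{Con}(\theta)$: because $\psi$ proves $\chi$, inside $\psi\wedge\mathsf{Con}(\psi)$ one derives $\psi\nvdash f(\psi)$, i.e.\ $\mathsf{Con}(\psi\wedge\neg f(\psi))$, hence $\mathsf{Con}(\theta)$. Chaining these gives $f(\theta)\vdash\theta\wedge\mathsf{Con}(\theta)$, and hypothesis (i) gives the converse, so $[f(\theta)]=[\theta\wedge\mathsf{Con}(\theta)]$. Your instinct to use a Rosser-style fixed point is unnecessary and likely unmanageable here, since $f(\psi)$ depends on the full syntax of $\psi$ and there is no obvious way to decide $\psi$ and $f(\psi)$ consistently across the diagonal. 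The missing idea in your proposal is the non-self-referential conditional $\theta$ together with the observation that hypothesis (ii) yields a \emph{single true sentence} $\chi$ strong enough to push $\mathsf{Con}(\psi)$ through to $\mathsf{Con}(\theta)$.
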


\begin{corollary} \label{no push down}
There is no monotonic function $f$ such that for all consistent $\varphi$, \\
(i) $\varphi\wedge\mathsf{Con}(\varphi)$ strictly implies $f(\varphi)$ and \\
(ii) $f(\varphi)$ strictly implies $\varphi$.
\end{corollary}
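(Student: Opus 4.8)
The plan is to obtain this as an immediate consequence of Theorem \ref{aaa}. I would argue by contradiction: suppose $f$ is monotonic and that for every consistent $\varphi$ we have (i) $\varphi\wedge\mathsf{Con}(\varphi)$ strictly implies $f(\varphi)$ and (ii) $f(\varphi)$ strictly implies $\varphi$. The first step is simply to unwind the word ``strictly'': clause (i) of the corollary entails in particular that $\varphi\wedge\mathsf{Con}(\varphi)$ implies $f(\varphi)$, which is hypothesis (i) of Theorem \ref{aaa}, while clause (ii) is verbatim hypothesis (ii) of Theorem \ref{aaa}. So $f$ satisfies the hypotheses of Theorem \ref{aaa}.

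Next I would fix a true sentence to feed into the theorem, say $\varphi:=(0=0)$. Theorem \ref{aaa} then produces a true sentence $\psi$ with $\psi\vdash\varphi$ and $[f(\psi)]=[\psi\wedge\mathsf{Con}(\psi)]$. Since $\psi$ is true it is consistent, so clause (i) of the corollary applies at $\psi$ and gives that $\psi\wedge\mathsf{Con}(\psi)$ strictly implies $f(\psi)$; in particular $f(\psi)\nvdash\psi\wedge\mathsf{Con}(\psi)$. This contradicts $[f(\psi)]=[\psi\wedge\mathsf{Con}(\psi)]$, so no such $f$ exists.

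I do not anticipate any real obstacle here: all of the content lives in Theorem \ref{aaa}, and the corollary merely records that one cannot strengthen clause (i) of that theorem to a strict implication while keeping clause (ii). The only minor points to verify are that ``strictly implies'' does in fact subsume the plain implication required by Theorem \ref{aaa}, and that a true (hence consistent) sentence exists to initialize the argument — both of which are trivial.
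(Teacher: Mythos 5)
Your proof is correct and follows exactly the route the paper intends: the corollary is stated as an immediate consequence of Theorem \ref{aaa} (Theorem \ref{first} in the body), and the paper leaves the derivation implicit precisely because it unfolds as you describe — weaken the strict implication in (i) to invoke the theorem, obtain a true (hence consistent) $\psi$ with $[f(\psi)]=[\psi\wedge\mathsf{Con}(\psi)]$, and note this contradicts strictness of (i) at $\psi$. Nothing to add.
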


We note that this result depends essentially on the condition of monotonicity. Shavrukov and Visser \cite{shavrukov2014uniform} studied recursive functions $f$ that are \emph{extensional} over the Lindenbaum algebra of $\mathsf{PA}$, i.e., 
$$\textrm{if }\mathsf{PA}\vdash(\varphi\leftrightarrow\psi) \textrm{, then } \mathsf{PA}\vdash(f(\varphi)\leftrightarrow f(\psi)),$$ 
and proved the following theorem. 
\begin{theorem}
\label{shavrukov visser}
(Shavrukov--Visser)
There is a recursive extensional function $f$ such that for all consistent $\varphi$, \\
(i) $\varphi\wedge\mathsf{Con}(\varphi)$ strictly implies $f(\varphi)$ and \\
(ii) $f(\varphi)$ strictly implies $\varphi$.
\end{theorem}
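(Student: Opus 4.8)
The plan is to build $f$ in the form $f(\varphi):=\varphi\wedge\theta_\varphi$, where $\varphi\mapsto\theta_\varphi$ is a recursive assignment with the property that, for every consistent $\varphi$,
\[
\mathsf{PA}\vdash\mathsf{Con}(\mathsf{PA}+\varphi)\to\theta_\varphi,\qquad \mathsf{PA}+\varphi\nvdash\theta_\varphi,\qquad \mathsf{PA}+\varphi+\theta_\varphi\nvdash\mathsf{Con}(\mathsf{PA}+\varphi);
\]
in other words $[\theta_\varphi]$ should lie strictly between $[\top]$ and $[\mathsf{Con}(\mathsf{PA}+\varphi)]$ over $\mathsf{PA}+\varphi$. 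Granting this, clause (ii) holds because $\mathsf{PA}+\varphi\nvdash\theta_\varphi$, and clause (i) holds because $\mathsf{PA}$ proves $(\varphi\wedge\mathsf{Con}(\varphi))\to(\varphi\wedge\theta_\varphi)$ while $\mathsf{PA}+\varphi+\theta_\varphi\nvdash\mathsf{Con}(\varphi)$ blocks the converse. Since the procedure computing $\theta_\varphi$ will be total, $f$ is recursive; and for inconsistent $\varphi$ one automatically has $f(\varphi)\equiv\bot\equiv\varphi$, which is vacuously fine for (i)--(ii) and consistent with extensionality, as all inconsistent sentences are $\mathsf{PA}$-equivalent. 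Extensionality of $f$ then reduces to extensionality of $\varphi\mapsto\theta_\varphi$, once one records the easy fact that $\varphi\mapsto\mathsf{Con}(\mathsf{PA}+\varphi)$ is extensional: a single fixed $\mathsf{PA}$-proof witnesses $\varphi\leftrightarrow\varphi'$, so by the formalized deduction theorem $\mathsf{PA}\vdash\mathsf{Con}(\mathsf{PA}+\varphi)\leftrightarrow\mathsf{Con}(\mathsf{PA}+\varphi')$.

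Ignoring extensionality for a moment, a suitable $\theta_\varphi$ exists by a routine Rosser argument. Put $S_\varphi:=\mathsf{PA}+\varphi+\neg\mathsf{Con}(\mathsf{PA}+\varphi)$, which is consistent whenever $\varphi$ is, by G\"odel's second incompleteness theorem, and let $\rho_\varphi$ be a Rosser sentence for $S_\varphi$, so that $S_\varphi$ neither proves nor refutes $\rho_\varphi$. Set $\theta_\varphi:=\mathsf{Con}(\mathsf{PA}+\varphi)\vee\rho_\varphi$. Then the first displayed condition is trivial; $\mathsf{PA}+\varphi\nvdash\theta_\varphi$ since $\mathsf{PA}+\varphi+\neg\theta_\varphi$ is $S_\varphi+\neg\rho_\varphi$, which is consistent; and $\mathsf{PA}+\varphi+\theta_\varphi\nvdash\mathsf{Con}(\mathsf{PA}+\varphi)$ since $\mathsf{PA}+\varphi+\theta_\varphi+\neg\mathsf{Con}(\mathsf{PA}+\varphi)$ is $S_\varphi+\rho_\varphi$, which is consistent.

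The genuine difficulty --- and the real content of the theorem --- is extensionality. The Rosser sentence of $S_\varphi$ depends on the proof predicate of $S_\varphi$, not merely on its deductive closure: if $\mathsf{PA}\vdash\varphi\leftrightarrow\varphi'$ then $S_\varphi$ and $S_{\varphi'}$ have the same theorems but genuinely different proof predicates, and Rosser sentences are notoriously sensitive to such changes (already for a fixed theory, reordering proofs can change the Rosser sentence modulo $\mathsf{PA}$-provable equivalence, by the Guaspari--Solovay analysis). The way I would try to repair this is to redo the fixed point so that the witness comparison defining $\rho_\varphi$ is expressed purely through quantities provably invariant under $\mathsf{PA}$-equivalence of $\varphi$ --- $\mathsf{PA}$-provability of implications $\varphi\to(\cdot)$ and the single extensional parameter $\mathsf{Con}(\mathsf{PA}+\varphi)$ --- rather than through raw proof lengths in $S_\varphi$; the Friedman--Goldfarb--Harrington theorem, which rewrites an arbitrary $\Sigma_1$ sentence as a $\mathsf{PA}$-provability statement modulo $\mathsf{PA}$-provable equivalence, is the natural device for converting the ``$\rho_\varphi$ has a short refutation'' clauses into provability clauses stable under the passage from $\varphi$ to $\varphi'$. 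One then carries out this fixed point uniformly in an index for $\varphi$ and, given a $\mathsf{PA}$-proof of $\varphi\leftrightarrow\varphi'$, argues $\mathsf{PA}\vdash\theta_\varphi\leftrightarrow\theta_{\varphi'}$ by directly comparing the two fixed-point sentences. I expect this last verification --- that the diagonal construction commutes with $\mathsf{PA}$-provable equivalence --- to be the main obstacle; everything preceding it is the G\"odel--Rosser bookkeeping above. (A Solovay-style simultaneous fixed point of several mutually referential sentences, engineered to create the required gap in a presentation-insensitive way, is a plausible alternative route, but the extensionality check would remain the crux there as well.)
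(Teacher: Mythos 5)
Your Rosser construction correctly produces, for each consistent $\varphi$, a sentence strictly between $\varphi$ and $\varphi\wedge\mathsf{Con}(\varphi)$, and you are right that extensionality is where the real work lies --- but you do not actually discharge it, and you say so yourself. The sketch ``redo the fixed point through the Friedman--Goldfarb--Harrington theorem so the witness comparison is presentation-invariant'' is a plausible research direction, not a proof; the Guaspari--Solovay sensitivity you cite is precisely the reason one cannot assume the diagonal commutes with $\mathsf{PA}$-provable equivalence without an explicit verification, which you explicitly flag as the step you do not carry out. As written, the proposal establishes only the non-extensional version of the theorem and leaves the extensional version --- the actual content of the statement --- open.

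The construction the paper records (following Shavrukov--Visser) sidesteps self-reference entirely. The witness is the explicit sentence
\[
\varphi^\star:=\varphi \wedge \forall x\big(\mathsf{Con}(I\Sigma_x+\varphi)\rightarrow\mathsf{Con}(I\Sigma_x+\varphi+\mathsf{Con}(I\Sigma_x+\varphi))\big),
\]
which quantifies over the $I\Sigma_x$ fragments rather than diagonalizing. Extensionality of $\varphi\mapsto\varphi^\star$ is then immediate by exactly the observation you make for $\mathsf{Con}$ itself: the formula is assembled from $\mathsf{Con}(I\Sigma_x+\cdot)$, and a single proof of $\varphi\leftrightarrow\varphi'$ yields, uniformly in $x$, proofs of $\mathsf{Con}(I\Sigma_x+\varphi)\leftrightarrow\mathsf{Con}(I\Sigma_x+\varphi')$. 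In that route the hard work goes into the two strictness claims (that $\varphi^\star$ really does separate $\varphi$ from $\varphi\wedge\mathsf{Con}(\varphi)$), not into extensionality --- the reverse of the distribution of difficulty in your sketch. The trade-off worth internalizing: a Rosser sentence buys strictness for free and makes extensionality the obstacle, whereas a sentence built transparently from the fragments hierarchy buys extensionality for free and puts all the content into the deductive-strength estimates.
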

In particular, Shavrukov and Visser proved that for any consistent $\varphi$, the sentence
$$\varphi^\star:=\varphi \wedge \forall x\big{(}\mathsf{Con}(I\Sigma_x+\varphi)\rightarrow\mathsf{Con}(I\Sigma_x+\varphi+\mathsf{Con}(I\Sigma_x+\varphi))\big{)}$$
has deductive strength strictly between $\varphi$ and $\varphi\wedge\mathsf{Con}(\varphi)$, and that the map $\varphi\mapsto\varphi^\star$ is extensional. By a theorem of Kripke and Pour-El \cite{pour1967deduction}, the Lindenbaum algebras of $\mathsf{PA}$ and $\mathsf{EA}$ are effectively isomorphic, whence Theorem \ref{shavrukov visser} also applies to $\mathsf{EA}$. Thus, Corollary \ref{no push down} cannot be strengthened by weakening the hypothesis of monotonicity to the hypothesis of extensionality.

We also note that Friedman, Rathjen, and Weiermann \cite{friedman2013slow} introduced a notion of \emph{slow consistency} with which they produced a $\Pi^0_1$ sentence $\mathsf{SlowCon}(\mathsf{PA})$ with deductive strength strictly between $\mathsf{PA}$ and $\mathsf{PA}+\mathsf{Con}(\mathsf{PA})$. In general, the statement $\mathsf{SlowCon}(\varphi)$ has the form
$$\forall x (F_{\epsilon_0}(x)\downarrow \rightarrow \mathsf{Con}(I\Sigma_x+\varphi))$$
where $F_{\epsilon_0}$ is a standard representation of a recursive function that is not provably total in $\mathsf{PA}$. This is not in conflict with Corollary \ref{no push down}, however, since $\varphi\wedge\mathsf{Con}(\varphi)$ and $\varphi\wedge\mathsf{SlowCon}(\varphi)$ are provably equivalent for all $\varphi$ such that $\varphi\vdash \forall x F_{\epsilon_0}(x)\downarrow$. On the other hand, changing the definition of the $\mathsf{SlowCon}(\varphi)$ so that the function in the antecedent varies with the input $\varphi$ results in a map that is not monotonic.

Theorem \ref{aaa} generalizes to the iterates of $\mathsf{Con}$ into the effective transfinite. For an elementary presentation $\alpha$ of a recursive well-ordering (see Definition \ref{elementary}) and a sentence $\varphi$, we define sentences $\mathsf{Con}^\beta(\varphi)$ for every $\beta<\alpha$.
\begin{flalign*}
\mathsf{Con}^0(\varphi):=&\top\\
\mathsf{Con}^{\beta+1}(\varphi):=&\mathsf{Con}(\varphi\wedge \mathsf{Con}^\beta(\varphi))\\
\mathsf{Con}^\lambda(\varphi):=&\forall\beta<\lambda(\mathsf{Con}^\beta(\varphi))
\end{flalign*}
For a precise definition using G\"{o}del's fixed point lemma, see Definition \ref{iterates}. Note that for every $\varphi$, $[\mathsf{Con}^1(\varphi)]=[\mathsf{Con}(\varphi)]$. 

\begin{remark}
We warn the reader that there is some discrepancy between our notation and the notation used by other authors. Our iteration scheme $\mathsf{Con}^{\alpha+1}(\varphi)\equiv\mathsf{Con}(\varphi\wedge\mathsf{Con}^\alpha(\varphi))$ is sometimes denoted $\mathsf{Con}((\mathsf{EA}+\varphi)_\alpha)$, e.g., \cite{beklemishev1995iterated}. Moreoever, the notation $\mathsf{Con}^{\alpha+1}(\varphi)$ is sometimes used to denote $\mathsf{Con}(\mathsf{Con}^\alpha(\varphi))$, e.g., \cite{beklemishev1991provability}.
\end{remark}

With each predicate $\mathsf{Con}^\alpha$ we associate a function 
$$\varphi\mapsto(\varphi\wedge \mathsf{Con}^\alpha(\varphi)).$$ 

Theorem \ref{aaa} then generalizes into the effective transfinite as follows.

\begin{theorem}\label{generalization of aaa}
Let $f$ be monotonic. Suppose that for all $\varphi$, \\
(i) $\varphi\wedge \mathsf{Con}^\alpha(\varphi)$ implies $f(\varphi)$,\\
(ii) if $[f(\varphi)]\neq[\bot]$, then $f(\varphi)$ strictly implies $\varphi\wedge \mathsf{Con}^\beta(\varphi)$ for all $\beta<\alpha.$ \\
Then for every true $\varphi$, there is a true $\psi$ such that $\psi\vdash\varphi$ and $[f(\psi)] = [\psi\wedge \mathsf{Con}^\alpha(\psi)].$
\end{theorem}

\begin{corollary}\label{generalization of no push down}
There is no monotonic $f$ such that for all $\varphi$, if $[\varphi\wedge\mathsf{Con}^\alpha(\varphi)]\neq[\bot]$, then both\\
(i) $\varphi\wedge \mathsf{Con}^\alpha(\varphi)$ strictly implies $f(\varphi)$ and\\
(ii) $f(\varphi)$ strictly implies $\varphi\wedge \mathsf{Con}^\beta(\varphi)$ for all $\beta<\alpha$.
\end{corollary}

Thus, if the range of a monotonic function $f$ is sufficiently constrained, then for some $\varphi$ and some $\alpha$, 
$$[f(\varphi)]=[\varphi\wedge \mathsf{Con}^\alpha(\varphi)]\neq[\bot].$$
This property still holds even when these constraints on the range of $f$ are relaxed considerably. More precisely, if a monotonic function is everywhere bounded by a finite iterate of $\mathsf{Con}$, then it must be somewhere equivalent to an iterate of $\mathsf{Con}$.

\begin{theorem}\label{finite version of zzz}
Let $n\in\mathbb{N}$. Let $f$ be a monotonic function such that for every $\varphi$, \\
(i) $\varphi\wedge \mathsf{Con}^n(\varphi)$ implies $f(\varphi)$ and \\
(ii) $f(\varphi)$ implies $\varphi$. \\
Then for some $\varphi$ and some $k\leq n$, $[f(\varphi)]=[\varphi\wedge \mathsf{Con}^k(\varphi)]\neq[\bot].$
\end{theorem}

To generalize this result into the effective transfinite, we focus on a particular class of monotonic functions that we call $\Pi^0_1$. 

\begin{definition}
A function $f$ is \emph{$\Pi^0_1$} if $f(\varphi)\in\Pi^0_1$ for all $\varphi$.
\end{definition}

Our main theorem is the following: if a monotonic function is everywhere bounded by a transfinite iterate of $\mathsf{Con}$, then it must be somewhere equivalent to an iterate of $\mathsf{Con}$. This to say that the iterates of the consistency operator are \emph{inevitable}; no monotonic function that is everywhere bounded by some iterate of $\mathsf{Con}$ can avoid all of the iterates of $\mathsf{Con}$.

\begin{theorem}
\label{zzz}
Let $\varphi\mapsto f(\varphi)$ be a monotonic $\Pi^0_1$ function Then either \\
(i) for some $\beta\leq\alpha$ and some $\varphi$, $[\varphi\wedge f(\varphi)]=[\varphi\wedge \mathsf{Con}^\beta(\varphi)]\neq[\bot]$ or\\
(ii) for some $\varphi$, $(\varphi\wedge\mathsf{Con}^\alpha(\varphi))\nvdash f(\varphi)$.
\end{theorem}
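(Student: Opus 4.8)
The plan is to argue by contraposition: assume (ii) fails, i.e.\ $(\varphi\wedge\mathsf{Con}^\alpha(\varphi))\vdash f(\varphi)$ for \emph{every} $\varphi$, and derive (i). Since $f$ is $\Pi^0_1$, the sentence $\varphi\wedge f(\varphi)$ is (up to provable equivalence) itself a $\Pi^0_1$ sentence that is implied by $\varphi\wedge\mathsf{Con}^\alpha(\varphi)$; replacing $f$ by $\varphi\mapsto\varphi\wedge f(\varphi)$ (still monotonic, still $\Pi^0_1$, now with $f(\varphi)\vdash\varphi$) I may as well assume condition (ii) of the previous theorem's hypothesis style, namely $f(\varphi)\vdash\varphi$ and $\varphi\wedge\mathsf{Con}^\alpha(\varphi)\vdash f(\varphi)$, and aim to find $\varphi$ and $\beta\le\alpha$ with $[f(\varphi)]=[\varphi\wedge\mathsf{Con}^\beta(\varphi)]\ne[\bot]$. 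This reduces Theorem \ref{zzz} to a transfinite strengthening of the finite-iterate theorem stated just above it.

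Next I would set up an induction along $\alpha$ (along the elementary presentation), with the finite case and the successor step modeled on the earlier results. The key idea, as in the proof of Theorem \ref{aaa}, is to use the partial truth predicates / the fact that $\mathsf{EA}$ proves enough reflection for $\Pi^0_1$ sentences, together with a fixed-point construction: given $f$, one builds, for a true input $\varphi$, a sentence $\psi\vdash\varphi$ that is true and ``diagonalizes'' against $f$ so that $f(\psi)$ is forced to land exactly on some $\psi\wedge\mathsf{Con}^\beta(\psi)$. Because $f$ is $\Pi^0_1$ and $\mathsf{Con}^\alpha$ is $\Pi^0_1$, the statement $f(\psi)\vdash\mathsf{Con}^\gamma(\psi)$ is arithmetically definable in a way $\mathsf{EA}$ can reason about; this lets one define, by effective transfinite recursion, the ``rank'' $\beta$ as the least $\gamma\le\alpha$ such that $f(\psi)$ does not strictly imply $\psi\wedge\mathsf{Con}^{\gamma+1}(\psi)$, and then show monotonicity plus the bound $\varphi\wedge\mathsf{Con}^\alpha(\varphi)\vdash f(\varphi)$ pins $[f(\psi)]$ to $[\psi\wedge\mathsf{Con}^\beta(\psi)]$. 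At limit stages $\lambda\le\alpha$ one uses $\mathsf{Con}^\lambda(\psi)=\forall\beta<\lambda\,\mathsf{Con}^\beta(\psi)$ and a uniformity argument so that the ``rank $\ge\lambda$'' case can be handled by quoting the inductive hypothesis below $\lambda$ uniformly in $\beta$.

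The main obstacle, I expect, is the limit case of the transfinite induction: unlike the finite iteration, at a limit $\lambda$ one cannot simply ``peel off one $\mathsf{Con}$,'' and one must ensure that the fixed-point sentence $\psi$ is constructed uniformly enough that the predicate ``$[f(\psi)]=[\psi\wedge\mathsf{Con}^\beta(\psi)]$'' behaves well as $\beta$ ranges below $\lambda$ — in particular that the least such $\beta$, if it exists below $\lambda$, is found correctly, and that if no such $\beta$ exists below $\lambda$ then in fact $[f(\psi)]=[\psi\wedge\mathsf{Con}^\lambda(\psi)]$. This requires the $\Pi^0_1$-ness of $f$ in an essential way (to keep the relevant statements low in the arithmetical hierarchy so that $\mathsf{EA}$'s limited reflection suffices) and a careful Gödel–fixed-point argument that threads the recursion on the elementary presentation $\alpha$ through the definition of $\psi$. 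Once the limit step is in place, the successor step and the base case follow the template of the preceding theorems, and assembling the induction yields alternative (i).
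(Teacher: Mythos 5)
Your high-level plan is consonant with the paper's: argue by contradiction/contraposition, reduce to the case where $f(\varphi)\vdash\varphi$ and $\varphi\wedge\mathsf{Con}^\alpha(\varphi)\vdash f(\varphi)$ for all $\varphi$, and lean on the $\Pi^0_1$-ness of $f$ to keep the complexity low enough that $\mathsf{EA}$ with a $\Pi^0_2$ truth predicate can talk about the construction. You also correctly identify the limit case as the crux. However, the proposal stops at the point where the actual work begins, and the two ideas you gesture at do not close the argument.

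First, the ``rank'' idea has a gap. Suppose you let $\beta$ be the least ordinal below $\alpha$ such that $f(\psi)\nvdash\psi\wedge\mathsf{Con}^{\beta+1}(\psi)$. This only gives you one direction, $\psi\wedge f(\psi)\vdash\psi\wedge\mathsf{Con}^\beta(\psi)$; it does not give you $\psi\wedge\mathsf{Con}^\beta(\psi)\vdash f(\psi)$, which is the other half of $[\psi\wedge f(\psi)]=[\psi\wedge\mathsf{Con}^\beta(\psi)]$. Monotonicity plus the global bound by $\mathsf{Con}^\alpha$ does not supply this: from $\psi\wedge\mathsf{Con}^\alpha(\psi)\vdash f(\psi)$ you only get that $\mathsf{Con}^\alpha$, not $\mathsf{Con}^\beta$, bounds $f(\psi)$. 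Pinning the equivalence class requires building a specific $\psi$ in which coincidence is forced, not reading off a rank from an arbitrary input. The paper does this by taking $\varphi_1$ to be a $\Pi^0_2$ truth encoding (among other things) the negation of alternative (i) --- i.e., that $f$ never coincides with any iterate of $\mathsf{Con}$ below $\alpha$ except on $[\bot]$ --- together with the $\Pi^0_2$ soundness of $\mathsf{EA}$ and the monotonicity of $f$, and then defining $\varphi_\gamma:=\varphi_1\wedge\forall\delta<\gamma\,(\mathsf{True}_{\Pi_1}(f(\varphi_\delta))\rightarrow\mathsf{Con}^\delta(\varphi_\delta))$. Your contrapositive framing (assume not-(ii), derive (i) directly) omits the assumption not-(i) entirely; but in the paper's construction not-(i) is indispensable, as it is one of the conjuncts of $\varphi_1$ that powers the diagonalization.

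Second, ``a uniformity argument'' is not a resolution of the limit case. The paper's actual resolution is Schmerl's reflexive transfinite induction (Proposition \ref{RTI}), applied to prove $\mathsf{EA}\vdash\forall\beta\leq\alpha\,\forall\gamma\leq\beta\,(\varphi_\beta+\mathsf{Con}^\gamma(\varphi_\beta)\vdash\mathsf{Con}^\gamma(\varphi_\beta\wedge\neg f(\varphi_\beta)))$, preceded by the auxiliary lemma $\mathsf{EA}\vdash\forall\gamma\leq\alpha\,((\varphi_\gamma\wedge\neg f(\varphi_\gamma))\vdash\varphi_\alpha)$. The whole point of reflexive transfinite induction is that an ordinary external induction on the ordinal fails here --- at a limit you need the inductive hypothesis not just to hold but to be $\mathsf{EA}$-provable, which is exactly what L\"ob's theorem delivers. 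Your proposal never names this technique, and without it the limit step simply does not go through. Until you specify the diagonalizing sequence, explain why its members stay $\Pi^0_2$, and invoke reflexive transfinite induction to push the key implication through limits inside $\mathsf{EA}$, the argument remains a sketch with a genuine missing step rather than a proof.
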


The main theorem bears a striking similarity to the following theorem of Slaman and Steel \cite{slaman1988definable}.

\begin{theorem}
\label{Slaman-Steel}
(Slaman--Steel) Suppose $f: 2^\omega\rightarrow2^\omega$ is Borel, order-preserving with respect to $\leq_T$, and increasing on a cone. Then for any $\alpha<\omega_1$ either\\
(i) for some $\beta\leq\alpha$, $f(x)\equiv_Tx^{(\beta)}$ cofinally or\\
(ii) $(x^{(\alpha)}<_Tf(x))$ cofinally.
\end{theorem}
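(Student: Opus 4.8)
The plan is to argue by transfinite induction on $\alpha<\omega_1$, using two standard facts about the Turing degrees. The first is Martin's cone theorem, a consequence of Borel determinacy and hence of $\mathsf{ZFC}$ alone, so that $\mathsf{AD}$ plays no role here: every Borel, $\equiv_T$-invariant set of reals either contains a Turing cone or is disjoint from one. All the sets we shall meet — $\{x:f(x)\equiv_T x^{(\beta)}\}$, $\{x:x^{(\beta)}\le_T f(x)\}$, $\{x:x^{(\beta)}<_T f(x)\}$ — are of this kind: $f$ is Borel, and for each fixed $\beta<\omega_1$ the map $x\mapsto x^{(\beta)}$ is Borel on $\{x:\beta<\omega_1^x\}$, which contains the cone above any real coding a well-order of type $\beta$. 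Consequently, for each such property, holding cofinally in $\le_T$ is equivalent to holding on a cone. The second fact is the uniformization form of Martin's theorem: if $g$ and $h$ are Borel and $g(x)\le_T h(x)$ on a cone, then $\Phi_e^{h(x)}=g(x)$ on a cone for one fixed index $e$ (the sets $\{x:\Phi_e^{h(x)}=g(x)\}$ are Borel and invariant, their union contains a cone, so by the cone theorem one of them already does). Finally, \emph{increasing on a cone} means $x\le_T f(x)$ on a cone; we fix such a cone and, shrinking cones finitely often as the proof proceeds, treat $f$ as order-preserving with $x\le_T f(x)$ throughout.

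\emph{Base and limit cases.} For $\alpha=0$: on a cone $x\le_T f(x)$, so either $f(x)\equiv_T x$ cofinally — which is (i) with $\beta=0$ — or else $\{x:f(x)\not\equiv_T x\}$ contains a cone, on which $x<_T f(x)$, which is (ii). For $\alpha$ a limit: apply the inductive hypothesis at each $\beta<\alpha$; if (i) holds for some $\beta<\alpha$ we are done, so assume $x^{(\beta)}<_T f(x)$ on a cone for every $\beta<\alpha$. Fix a cofinal $\omega$-sequence $\beta_n\nearrow\alpha$ and a real coding it and $\alpha$; by uniformization choose $e_n$ with $\Phi_{e_n}^{f(x)}=x^{(\beta_n)}$ on a cone above $z_n$; let $w$ code $(e_n)_n$ and pass to the cone above $w\oplus\bigoplus_n z_n$ (also containing the coding real). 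There $f(x)\ge_T x\ge_T w$, so $f(x)$ computes every $e_n$ and hence $\bigoplus_n x^{(\beta_n)}$, which on this cone is $\equiv_T x^{(\alpha)}$. Thus $x^{(\alpha)}\le_T f(x)$ on a cone, and as in the base case either $f(x)\equiv_T x^{(\alpha)}$ cofinally, giving (i) with $\beta=\alpha$, or $x^{(\alpha)}<_T f(x)$ on a cone, giving (ii).

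\emph{Successor case $\alpha=\gamma+1$.} Apply the inductive hypothesis at $\gamma$. If (i) holds at some $\beta\le\gamma$ we are done, so assume $x^{(\gamma)}<_T f(x)$ on a cone $\mathcal{C}_0$. On $\mathcal{C}_0$ one has $f(x)\ge_T x^{(\gamma)}\ge_T x^{(\beta)}$ strictly for $\beta\le\gamma$, so for $\alpha$ clause (i) can hold only at $\beta=\gamma+1$, and if (ii) holds we are done. Suppose then that (i) at $\beta=\gamma+1$ and (ii) both fail; then $f(x)\not\equiv_T x^{(\gamma+1)}$ on a cone and $x^{(\gamma+1)}\not<_T f(x)$ on a cone, so on a common subcone $\mathcal{C}$ above some $z$ we have both $x^{(\gamma)}\le_T f(x)$ and $x^{(\gamma+1)}\not\le_T f(x)$. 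It remains to refute this configuration — this is the ``jump lemma'' at the heart of Slaman and Steel's argument, in the form relativized to the $\gamma$-th jump over $z$: no Borel order-preserving $f$ can have $x^{(\gamma)}\le_T f(x)$ and $x^{(\gamma+1)}\not\le_T f(x)$ on a cone. First fix, by uniformization, an index $e_0$ with $\Phi_{e_0}^{f(x)}=x^{(\gamma)}$ on $\mathcal{C}$. Then run a Posner--Robinson-style jump-inversion construction interleaved with a diagonalization — equivalently, a Borel game, Borel determinacy again sufficing because $f$ is Borel — whose effect is to produce a real $G\in\mathcal{C}$ for which order-preservation of $f$, together with the reduction $\Phi_{e_0}^{f(G)}=G^{(\gamma)}$, forces $G^{(\gamma+1)}\le_T f(G)$, contradicting $G\in\mathcal{C}$. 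Feeding this contradiction back, one of (i) — at some $\beta\le\gamma+1$ — and (ii) must hold, completing the induction.

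The step I expect to be the main obstacle is exactly this last one: designing the jump-inversion construction, or in the game formulation the game itself and the argument that a winning strategy for either player is incompatible with the configuration on $\mathcal{C}$, so that the monotonicity of $f$ can be leveraged to drag $f(G)$ all the way up to the next jump $G^{(\gamma+1)}$. The rest is bookkeeping: chasing cones, invoking Martin's cone theorem and its uniformization form, and using the elementary behavior of the iterated jump on effective joins — in particular that $x^{(\alpha)}\equiv_T\bigoplus_n x^{(\beta_n)}$ for a fundamental sequence $\beta_n\nearrow\alpha$, which is available on the cones we work over since there $\alpha$ and the sequence are recursive in the oracle.
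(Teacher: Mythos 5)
First, a point of comparison: the paper does not prove Theorem \ref{Slaman-Steel} at all — it is quoted from Slaman and Steel \cite{slaman1988definable} solely to draw an analogy with Theorem \ref{zzz} — so there is no in-paper proof to measure your attempt against; it must stand on its own. Your outline does reproduce the standard architecture of the Slaman--Steel argument: Martin's cone theorem for Borel degree-invariant sets converts ``cofinally'' into ``on a cone,'' its uniformized form supplies fixed indices for reductions holding on a cone, and the transfinite induction with the limit case handled by joining the uniformized reductions $\Phi_{e_n}^{f(x)}=x^{(\beta_n)}$ is correct. The gap is exactly where you say it is, and it is not a small one: the entire mathematical content of the theorem lives in the successor step, which you leave as a black box (``run a Posner--Robinson-style jump-inversion construction interleaved with a diagonalization \dots whose effect is to produce a real $G$ \dots forcing $G^{(\gamma+1)}\le_T f(G)$''). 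No game is defined, no payoff set is written down, and no argument is given for why a winning strategy for either player is incompatible with the assumed configuration; this lemma on order-preserving Borel functions is precisely the part of Slaman and Steel's paper that requires genuine work, and everything else in your write-up is routine cone bookkeeping.

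Compounding this, the ``jump lemma'' as you state it is false: ``no Borel order-preserving $f$ can have $x^{(\gamma)}\le_T f(x)$ and $x^{(\gamma+1)}\not\le_T f(x)$ on a cone'' is refuted by $f(x)=x^{(\gamma)}$ itself. What is true, and what you actually need, is the dichotomy: if $x^{(\gamma)}\le_T f(x)$ on a cone, then either $f(x)\le_T x^{(\gamma)}$ on a cone or $x^{(\gamma+1)}\le_T f(x)$ on a cone; in your setting the first disjunct is excluded because the inductive hypothesis already gives the strict inequality $x^{(\gamma)}<_T f(x)$ on a cone, and that strictness is an essential hypothesis, not an afterthought. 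So the skeleton of your induction is sound once the lemma is stated in dichotomy form, but as written the proposal both asserts a false lemma and omits the proof of the true one — the latter being the theorem's actual content.
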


There are two notable disanalogies between Theorem \ref{zzz} and Theorem \ref{Slaman-Steel}. First, Theorem \ref{zzz} guarantees only that sufficiently constrained functions are \emph{somewhere} equivalent to an iterate of $\mathsf{Con}$, whereas Theorem \ref{Slaman-Steel} guarantees \emph{cofinal} equivalence with an iterate of the Turing jump. Second, by assuming $\mathsf{AD}$, Slaman and Steel inferred that this behavior happens not only cofinally but also \emph{on a cone} in the Turing degrees. There is no obvious analogue of $\mathsf{AD}$ from which one can infer that if cofinally many Lindenbaum degrees have a property then every element in some non-trivial ideal of Lindenbaum degrees has that property.

We then turn our attention to a generalization of consistency, namely, 1-consistency. Recall that a theory $T$ is \emph{1-consistent} if $T$ is consistent with the true $\Pi^0_1$ theory of arithmetic. Just as the $\Pi^0_1$ fragments of natural theories are often proof-theoretically equivalent to iterated consistency statements over a weak base theory, the $\Pi^0_2$ fragments of natural theories are often proof-theoretically equivalent to iterated 1-consistency statements over a weak base theory

Conservativity theorems relating 1-consistency and iterated consistency play an important role in the proof-theoretic analysis of arithmetic theories. For instance, it is a consequence of Beklemshev's \emph{reduction principle} \cite{beklemishev2005reflection} that for any $\Pi^0_1$ $\varphi$, 
$$\mathsf{EA} + 1\mathsf{Con}(\mathsf{EA}) \vdash\varphi \textit{ if and only if } \mathsf{EA} + \{\mathsf{Con}^k(\mathsf{EA}):k<\omega\}\vdash \varphi.$$ 
This fact plays an integral role in Beklemishev's \cite{beklemishev2004provability} consistency proof of $\mathsf{PA}$. We show that this conservativity result is drastically violated \emph{in the limit}. For functions $f$ and $g$, we say that $f$ \emph{majorizes} $g$ if there is a consistent $\varphi$ such that for all $\psi$, if $\psi\vdash\varphi$ then $f(\psi)\vdash g(\psi)$; if in addition $\varphi$ is true then we say that $f$ \emph{majorizes $g$ on a true ideal.}

\begin{proposition}\label{majorizing prop}
For any elementary presentation $\alpha$ of a recursive well-ordering, $1\mathsf{Con}$ majorizes $\mathsf{Con}^\alpha$ on a true ideal.
\end{proposition}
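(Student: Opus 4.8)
The plan is to exploit the fact that $1\mathsf{Con}$ is far stronger than any single iterate $\mathsf{Con}^\alpha$, while being ``robust'' in the sense that it remains strong even when restricted below a true sentence. Concretely, I want to find a true sentence $\theta$ such that for every $\psi\vdash\theta$ we have $1\mathsf{Con}(\psi)\vdash\mathsf{Con}^\alpha(\psi)$, i.e., $\mathsf{EA}\vdash\psi\to(1\mathsf{Con}(\psi)\to\mathsf{Con}^\alpha(\psi))$, and moreover $\theta$ is chosen so that this implication is itself provable \emph{in a uniform way}. The natural candidate for $\theta$ is a statement asserting that the presentation $\alpha$ is (provably) well-founded in an appropriate effective sense — something like $\forall x\, \mathsf{Con}^{\alpha}(\mathsf{EA})$ is too strong, so instead I would take $\theta$ to be a true $\Pi^0_1$ (or $\Pi^0_2$) sentence that packages enough induction/reflection to carry out the relevant transfinite recursion.

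First, I would recall the standard conservativity fact underlying Beklemishev's reduction principle: over $\mathsf{EA}$, $1\mathsf{Con}(\mathsf{EA})$ proves $\mathsf{Con}^n(\mathsf{EA})$ for every standard $n$, and more generally, relativizing, $\mathsf{EA}+1\mathsf{Con}(\mathsf{EA})$ proves the uniform $\Pi^0_1$-reflection-style statement that yields $\mathsf{Con}^\beta(\mathsf{EA})$ along any recursive well-ordering, \emph{provided} the well-ordering is available. The point is that the derivation of $\mathsf{Con}^\alpha(\psi)$ from $1\mathsf{Con}(\psi)$ proceeds by $\Pi^0_1$-transfinite induction along $\alpha$: one shows $\forall\beta<\alpha\,\mathsf{Con}^\beta(\psi)$ by noting the successor step $\mathsf{Con}^\beta(\psi)\to\mathsf{Con}^{\beta+1}(\psi)$ follows from $1$-consistency (since $\mathsf{Con}^\beta(\psi)$ is $\Pi^0_1$ and true-in-the-sense-needed under $1\mathsf{Con}$), and limits are handled by the definition. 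Second, I would identify the \emph{finitely many} extra axioms needed to make this transfinite induction go through inside $\mathsf{EA}$ — essentially a true $\Pi^0_2$ statement $\theta$ asserting totality of a function computing the relevant bounds along $\alpha$, together with enough induction. Take this $\theta$; it is true because $\alpha$ really is a well-ordering.

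Third, I would verify the majorization clause: for any $\psi$ with $\psi\vdash\theta$, we have $\mathsf{EA}\vdash\psi\to\theta$, hence $\mathsf{EA}+\psi$ proves whatever combinatorial facts about $\alpha$ are needed, so $\mathsf{EA}+\psi+1\mathsf{Con}(\psi)$ carries out the $\Pi^0_1$-transfinite induction and proves $\mathsf{Con}^\alpha(\psi)=\forall\beta<\alpha\,\mathsf{Con}^\beta(\psi)$. (Here I would lean on the earlier Definition \ref{iterates} of the iterates via G\"odel's fixed-point lemma, and on the fact that the iteration is ``provably monotone'' so that $1\mathsf{Con}(\psi)$ — not merely $1\mathsf{Con}(\mathsf{EA})$ — is what drives each successor step.) Thus $1\mathsf{Con}(\psi)\vdash\mathsf{Con}^\alpha(\psi)$, which is exactly $1\mathsf{Con}(\psi)\vdash\mathsf{Con}^\alpha(\psi)$ in the sense of the majorization definition (with the roles: $f=1\mathsf{Con}$, $g=\mathsf{Con}^\alpha$, witness sentence $\theta$). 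Since $\theta$ is true, this is majorization on a true ideal.

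The main obstacle I anticipate is making the ``uniform'' transfinite induction genuinely formalizable inside $\mathsf{EA}+\psi$ rather than merely true: one must pin down precisely which $\Pi^0_2$ statement $\theta$ suffices, and check that the passage from $\mathsf{Con}^\beta(\psi)$ to $\mathsf{Con}^{\beta+1}(\psi)$ via $1$-consistency is provable \emph{with $\beta$ as a free variable ranging below $\alpha$}, which requires that the sequence $\beta\mapsto\mathsf{Con}^\beta(\psi)$ be given by a single elementary formula (as guaranteed by the fixed-point construction) and that $1\mathsf{Con}(\psi)$ be strong enough to reflect each instance. A secondary subtlety is that $\mathsf{Con}^\lambda$ at limits is defined as a $\Pi$-statement $\forall\beta<\lambda\,\mathsf{Con}^\beta(\psi)$, so the induction hypothesis must be exactly of this uniform form for the argument to close at limit stages; choosing the induction formula correctly (as $\forall\beta<x\,\mathsf{Con}^\beta(\psi)$) resolves this. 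Everything else — truth of $\theta$, the definition-chasing for $1\mathsf{Con}$, closure under $\vdash$ — is routine.
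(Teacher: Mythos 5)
Your proposal follows essentially the same approach as the paper: the witness sentence is a true $\varphi$ implying $\mathsf{TI}^\alpha_{\Pi^0_1}$ (transfinite induction along $\alpha$ for $\Pi^0_1$ predicates), and the argument proceeds by exactly that transfinite induction, with the successor step $\mathsf{Con}^\beta(\psi)\rightarrow\mathsf{Con}^{\beta+1}(\psi)$ obtained by instantiating the universal quantifier in $1\mathsf{Con}(\psi)$ at the $\Pi^0_1$ sentence $\mathsf{Con}^\beta(\psi)$. Your description of the witness $\theta$ ("totality of a function computing the relevant bounds," "enough induction/reflection") is looser than the paper's explicit choice $\mathsf{TI}^\alpha_{\Pi^0_1}$, but the structure you propose — base, successor, limit, with $1\mathsf{Con}$ driving the successor step and $\theta$ supplying the induction scheme — is the one the paper uses.
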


It is tempting to conjecture on the basis of this result that $1\mathsf{Con}$ is the weakest monotonic function majorizing each $\mathsf{Con}^\alpha$ for $\alpha$ a recursive well-ordering. We prove that this is not the case.

\begin{theorem}\label{consequence of majorizing prop}
There are infinitely many monotonic functions $f$ such that for every recursive ordinal $\alpha$, there is an elementary presentation $a$ of $\alpha$ such that $f$ majorizes $\mathsf{Con}^a$ on a true ideal but also $1\mathsf{Con}$ majorizes $f$ on a true ideal.
\end{theorem}

Theorem \ref{aaa} demonstrates that for any monotonic $f$ with a sufficiently constrained range, $f$ must agree cofinally with $\mathsf{Con}$. We would like to strengthen \emph{cofinally} to \emph{on a true ideal}. One strategy for establishing this claim would be to show that every set that is closed under $\mathsf{EA}$ provable equivalence and that contains cofinally many true sentences also contains every sentence in some true ideal. We show that this strategy fails.

\begin{proposition}\label{limitative result}
There is a recursively enumerable set $\mathcal{A}$ that contains arbitrarily strong true sentences and that is closed under $\mathsf{EA}$ provable equivalence but does not contain any true ideals.
\end{proposition}

It is not clear whether Theorem \ref{aaa} can be strengthened in the desired manner.

\section{No monotonic function is strictly between the identity and $\mathsf{Con}$}

In this section we prove that no monotonic function sends every consistent $\varphi$ to a sentence with deductive strength strictly between $\varphi$ and $(\varphi\wedge\mathsf{Con}(\varphi))$. Most of the work is contained in the proof of the following lemma.

\begin{lemma}
\label{bbb}
Let $f$ be a monotonic function such that for all consistent $\varphi$, $f(\varphi)$ strictly implies $\varphi$. Then for every true sentence $\varphi$ there is a true sentence $\theta$ such that $\theta\vdash\varphi$ and $f(\theta)\vdash (\theta\wedge \mathsf{Con}(\theta))$.
\end{lemma}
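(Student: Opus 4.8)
The plan is to build the true sentence $\theta$ by a self-referential construction (an application of Gödel's fixed point lemma) that forces $f(\theta)$ to prove $\mathsf{Con}(\theta)$. The idea is that $\theta$ will assert something like ``$\varphi$ holds, and for every proof $p$ witnessing that $f(\theta)$ is inconsistent, there is a shorter proof that $\theta$ is inconsistent.'' More precisely, I would take $\theta$ to be a fixed point of a formula that, given the code of $\theta$, says: $\varphi$ is true, and for all $n$, if $n$ codes an $\mathsf{EA}$-proof of $\neg f(\theta)$ then there is $m \le n$ coding an $\mathsf{EA}$-proof of $\neg\theta$. (Here $f(\theta)$ is available inside the formula because $f$ is recursive, hence its graph is elementarily definable, and we apply the fixed point lemma to the resulting arithmetic formula in one free variable.) Note that $\theta \vdash \varphi$ is immediate from the first conjunct, and I will need to argue separately that $\theta$ is true and that $f(\theta) \vdash (\theta \wedge \mathsf{Con}(\theta))$.

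First I would check that $\theta$ is true. Since $\varphi$ is true, it suffices to show that the second conjunct is true, i.e., that $\mathsf{EA}$ does not prove $\neg f(\theta)$ (so the antecedent fails for every $n$). Suppose toward a contradiction that $\mathsf{EA} \vdash \neg f(\theta)$, i.e., $[f(\theta)] = [\bot]$. By the hypothesis on $f$, $f(\theta)$ strictly implies $\theta$ whenever $\theta$ is consistent; but if $f(\theta)$ is inconsistent and $\theta$ is consistent then $f(\theta)$ implies $\theta$ trivially and $\theta$ does not imply $f(\theta)$, so this is consistent with the hypothesis. So I instead argue as follows: if $\mathsf{EA} \vdash \neg f(\theta)$, then $\mathsf{EA}$ verifies that the antecedent ``$n$ codes a proof of $\neg f(\theta)$'' is satisfiable, and since proofs are found effectively, $\mathsf{EA}$ proves $\exists n\, \mathrm{Prf}(n, \ulcorner \neg f(\theta)\urcorner)$; then by the fixed point $\theta$ provably implies $\exists m\, \mathrm{Prf}(m, \ulcorner\neg\theta\urcorner)$, i.e., $\theta \vdash \mathsf{Pr}(\ulcorner\neg\theta\urcorner)$, so $\theta$ proves its own inconsistency and hence $\mathsf{EA} \vdash \neg\theta$ by (formalized) Löb-style reasoning — actually directly: $\theta \vdash \neg\theta$ gives $\mathsf{EA} \vdash \neg\theta$. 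But $\neg\theta$ together with $\varphi$ true means the second conjunct of $\theta$ is false on the standard model while $\mathsf{EA}\vdash\neg\theta$; since $\mathsf{EA}$ is true this forces $\neg\theta$ true, consistent — so this branch shows $\theta$ could be false. I would therefore refine: choose $\theta$ so that inconsistency of $f(\theta)$ is ruled out by monotonicity, e.g. iterate or combine with the observation that $f(\theta)\vdash\theta$ lets us compare proof lengths of $\neg\theta$ and $\neg f(\theta)$ directly.

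The cleanest route, which I would actually carry out, is this: let $\theta$ be the fixed point asserting ``$\varphi \wedge$ (every $\mathsf{EA}$-proof of $\neg\theta$ is longer than every $\mathsf{EA}$-proof of $\neg f(\theta)$ that precedes it)'' — equivalently a Rosser-style sentence relating the two — arranged so that (a) if $\theta$ is consistent, then from $f(\theta)\vdash\theta$ one gets that any proof of $\neg f(\theta)$ yields a proof of $\neg\theta$ of comparable length, which by the clause forces a shorter proof of $\neg f(\theta)$, an infinite descent showing $\neg f(\theta)$ is unprovable, i.e. $f(\theta)$ is consistent; and (b) $\mathsf{EA}$, and hence $f(\theta)$ once we know $f(\theta)\vdash\theta\vdash$ the relevant arithmetical facts, proves this consistency statement $\mathsf{Con}(\theta)$. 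Then $f(\theta)\vdash\theta$ and $f(\theta)\vdash\mathsf{Con}(\theta)$ give $f(\theta)\vdash(\theta\wedge\mathsf{Con}(\theta))$, and truth of $\theta$ follows since $\varphi$ is true and the proof-length clause is a true $\Pi^0_1$-ish statement (both $\neg\theta$ and $\neg f(\theta)$ being in fact unprovable).

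The main obstacle I anticipate is getting $f(\theta) \vdash \mathsf{Con}(\theta)$ rather than merely $\mathsf{EA} \vdash \mathsf{Con}(\theta)$ from outside: the fixed-point clause must be written so that $f(\theta)$ itself (not just the metatheory) can carry out the descent argument, which requires that $f(\theta)$ prove $\theta$ and that $\mathsf{EA}$ prove the implication ``$\mathsf{Pr}(\ulcorner\neg f(\theta)\urcorner) \to \mathsf{Pr}(\ulcorner\neg\theta\urcorner)$ with controlled proof length'' — this is where the hypothesis that $f$ is \emph{monotonic} (so $f(\theta)\vdash f(\psi)$ for suitable $\psi$) and the provable $\Sigma_1$-completeness of $\mathsf{EA}$ are essential, and the bookkeeping of proof lengths in the Rosser construction is the delicate part. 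I would handle it by the standard trick of making the fixed-point formula quantify over proofs in increasing order and invoking provable $\Sigma_1$-completeness to formalize each finite stage inside $\mathsf{EA}$.
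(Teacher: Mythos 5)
Your proposal takes a fundamentally different route from the paper, and it has a genuine gap at precisely the step you yourself flag as the ``main obstacle.'' You are trying to use a Rosser-style fixed point comparing proof lengths of $\neg\theta$ and $\neg f(\theta)$ so that $f(\theta)\vdash\mathsf{Con}(\theta)$. But a Rosser clause, once available inside $f(\theta)$ via $f(\theta)\vdash\theta$, only yields a \emph{relative} consistency statement of the form $\mathsf{Con}(\text{one thing})\rightarrow\mathsf{Con}(\text{another})$; it provides no unconditional grip on $\mathsf{Con}(\theta)$. And $f(\theta)$ has no independent source of consistency information beyond $\theta$ itself: by the second incompleteness theorem $f(\theta)$ certainly does not prove its own consistency, and nothing in the hypotheses forces $f(\theta)$ to prove $\mathsf{Con}(\theta)$ unless $\theta$ is engineered so that its consistency follows from facts $f(\theta)$ actually proves. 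Your sketch never identifies such facts; the appeals to monotonicity and $\Sigma_1$-completeness in the last paragraph gesture at the issue but do not resolve it. (Your first attempt in the second paragraph correctly unravels; you abandon it, but the replacement is not carried through.)

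The paper's proof avoids fixed points entirely and uses a different key move: \emph{internalize the hypothesis on $f$ as a single arithmetic sentence} $\chi:=\forall\zeta(\mathsf{Con}(\zeta)\rightarrow\mathsf{Con}(\zeta\wedge\neg f(\zeta)))$, which is true by assumption. Set $\psi:=\varphi\wedge\chi$ and $\theta:=\psi\wedge(f(\psi)\rightarrow\mathsf{Con}(\psi))$. Then (a) $\theta\vdash\psi$ and, because the conditional is vacuous under $\neg f(\psi)$, also $\psi\wedge\neg f(\psi)\vdash\theta$, so $\mathsf{Con}(\psi\wedge\neg f(\psi))\vdash\mathsf{Con}(\theta)$; (b) monotonicity gives $f(\theta)\vdash f(\psi)$, and together with $f(\theta)\vdash\theta$ this yields $f(\theta)\vdash\mathsf{Con}(\psi)$ by discharging the conditional; (c) $\chi$, which $\theta$ proves, upgrades $\mathsf{Con}(\psi)$ to $\mathsf{Con}(\psi\wedge\neg f(\psi))$ and hence to $\mathsf{Con}(\theta)$. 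This is precisely the mechanism your proposal lacks: a sentence $\theta$ whose consistency is \emph{provably reducible} (inside $\theta$ itself, hence inside $f(\theta)$) to a statement that monotonicity forces $f(\theta)$ to prove. You should look for this structure---the conditional $f(\psi)\rightarrow\mathsf{Con}(\psi)$ conjoined with the internalized hypothesis $\chi$---rather than a proof-length descent.
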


\begin{proof}
Let $f$ be as in the statement of the theorem. By assumption the following statement is true. 
$$\chi:=\forall \zeta(\mathsf{Con}(\zeta)\rightarrow \mathsf{Con}(\zeta\wedge \neg f(\zeta)))$$ 
Let $\varphi$ be a true sentence. Then the sentence $\psi:=\varphi\wedge\chi$ is true. Let $$\theta:=(\psi\wedge (f(\psi)\rightarrow \mathsf{Con}(\psi))).$$ Note that $\theta\vdash\varphi$.

\begin{claim}
$f(\theta)\vdash (\theta\wedge f(\psi))$.
\end{claim}

Clearly $\theta\vdash\psi$. So $f(\theta)\vdash f(\psi)$ since $f$ is monotonic. Also $f(\theta)\vdash\theta$ by assumption.

\begin{claim}
$(\theta\wedge f(\psi))\vdash (\psi\wedge \mathsf{Con}(\psi))$.
\end{claim}

Immediate from the definition of $\theta$.

\begin{claim}
$(\psi\wedge \mathsf{Con}(\psi))\vdash (\theta\wedge \mathsf{Con}(\theta))$.
\end{claim}

Clearly $(\psi\wedge \mathsf{Con}(\psi))\vdash \theta$. It suffices to show that \[(\psi\wedge \mathsf{Con}(\psi))\vdash \mathsf{Con}(\theta).\] We reason as follows.
\begin{flalign*}
(\psi\wedge\mathsf{Con}(\psi))&\vdash\forall \zeta(\mathsf{Con}(\zeta)\rightarrow \mathsf{Con}(\zeta \wedge \neg f(\zeta))) \textrm{ by choice of $\psi$.}\\
&\vdash \mathsf{Con}(\psi)\rightarrow \mathsf{Con}(\psi \wedge \neg f(\psi)) \textrm{ by instantiation.}\\
& \vdash \mathsf{Con}(\psi\wedge\neg f(\psi)) \textrm{ by logic.} \\
& \vdash \mathsf{Con}(\theta) \textrm{ by the definition of $\theta$.}
\end{flalign*} 
It is immediate from the preceding claims that $f(\theta)\vdash (\theta\wedge \mathsf{Con}(\theta))$.
\end{proof}

A number of results follow immediately from the lemma.

\begin{theorem}[Restatement of Theorem \ref{aaa}]
\label{first}
Let $f$ be monotonic. Suppose that for all consistent $\varphi$,\\
(i) $\varphi\wedge \mathsf{Con}(\varphi)$ implies $f(\varphi)$ and\\
(ii) $f(\varphi)$ strictly implies $\varphi$. \\
Then for every true $\varphi$, there is a true $\psi$ such that $\psi\vdash\varphi$ and $[f(\psi)] = [\psi\wedge \mathsf{Con}(\psi)].$
\end{theorem}

\begin{proof}
By the lemma, for every true $\varphi$ there is a true $\psi$ such that $\psi\vdash\varphi$ and $f(\psi)\vdash (\psi\wedge \mathsf{Con}(\psi))$. Since we are assuming that $(\psi\wedge \mathsf{Con}(\psi))\vdash f(\psi)$, it follows that $[f(\psi)]=[\psi\wedge \mathsf{Con}(\psi)]$.
\end{proof}

We note that this theorem applies to a number of previously studied operators. For instance, the theorem applies to the notion of \emph{cut-free consistency}, i.e., consistency with respect to cut-free proofs. $\mathsf{EA}$ does not prove the cut-elimination theorem, which is equivalent to the totality of super-exponentiation (over $\mathsf{EA}$), and does not prove the equivalence of cut-free consistency and consistency. Another such operator is the Friedman-Rathjen-Weiermann \emph{slow consistency operator} discussed in \textsection{1}. Theorem \ref{first} implies that these operators exhibit the same behavior as the consistency operator ``in the limit.'' Indeed, for any $\varphi$ such that $\varphi$ proves the cut-elimination theorem, $\varphi\wedge\mathsf{Con}(\varphi)$ and $\varphi\wedge\mathsf{Con_{CF}}(\varphi)$ are $\mathsf{EA}$-provably equivalent. Likewise, for any $\varphi$ that proves the totality of $F_{\epsilon_0}$, $\varphi\wedge\mathsf{Con}(\varphi)$ and $\varphi\wedge\mathsf{SlowCon}(\varphi)$ are $\mathsf{EA}$-provably equivalent.

As a corollary of Theorem \ref{first} we note that no monotonic function reliably produces sentences strictly between those produced by the identity and by $\mathsf{Con}$.

\begin{corollary}[Restatement of Corollary \ref{no push down}]
There is no monotonic function $f$ such that for all consistent $\varphi$, \\
(i) $\varphi\wedge\mathsf{Con}(\varphi)$ strictly implies $f(\varphi)$ and \\
(ii) $f(\varphi)$ strictly implies $\varphi$.
\end{corollary}

Shavrukov and Visser \cite{shavrukov2014uniform} studied functions over Lindenbaum algberas and discovered a recursive \emph{extensional uniform density function} $g$ for the Lindenbaum algebra of $\mathsf{EA}$, i.e., (i) for any $\varphi$ and $\psi$ such that $\psi$ strictly implies $\varphi$, $g(\langle\varphi,\psi\rangle)$ is a sentence with deductive strength strictly between $\varphi$ and $\psi$ and (ii) if $\mathsf{EA}\vdash(\varphi\leftrightarrow\psi)$ then, for any $\theta$, $[g(\langle\varphi,\theta\rangle)]=[g(\langle\psi,\theta\rangle)]$ and $[g(\langle\theta,\varphi\rangle)]=[g(\langle\theta,\psi\rangle)]$. They asked whether this result could be strengthened by exhibiting a recursive uniform density function that is monotonic in both its coordinates. As a corollary of our theorem we answer their question negatively.

\begin{corollary}
There is no monotonic uniform density function for the Lindenbaum algebra of $\mathsf{EA}$.
\end{corollary}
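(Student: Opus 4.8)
The plan is to reduce to Corollary~\ref{no push down}. Suppose toward a contradiction that $g$ is a recursive uniform density function for the Lindenbaum algebra of $\mathsf{EA}$ that is monotonic in both of its coordinates. Recall this means: (a) whenever $\psi$ strictly implies $\varphi$, the sentence $g(\langle\varphi,\psi\rangle)$ has deductive strength strictly between $\varphi$ and $\psi$; and (b) $g$ is monotonic as a function of the pair, i.e.\ if $\varphi\vdash\varphi'$ and $\psi\vdash\psi'$ then $g(\langle\varphi,\psi\rangle)\vdash g(\langle\varphi',\psi'\rangle)$ (equivalently, monotonic in each coordinate separately in the sense of this paper). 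I will extract from $g$ a monotonic one-variable function $f$ that sends every consistent $\varphi$ to a sentence of strength strictly between $\varphi$ and $\varphi\wedge\mathsf{Con}(\varphi)$, directly contradicting Corollary~\ref{no push down}.

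The function to use is $f(\varphi):=g(\langle\varphi,\,\varphi\wedge\mathsf{Con}(\varphi)\rangle)$, which is recursive since it is a composition of $g$ with the recursive map $\varphi\mapsto\langle\varphi,\varphi\wedge\mathsf{Con}(\varphi)\rangle$. Two elementary facts make this work. First, the map $\varphi\mapsto\varphi\wedge\mathsf{Con}(\varphi)$ is monotonic: if $\varphi\vdash\varphi'$ then $\mathsf{EA}$ proves (the arithmetization of) ``if $\mathsf{EA}+\varphi$ is consistent then so is $\mathsf{EA}+\varphi'$'', so $\mathsf{Con}(\varphi)\vdash\mathsf{Con}(\varphi')$ and hence $\varphi\wedge\mathsf{Con}(\varphi)\vdash\varphi'\wedge\mathsf{Con}(\varphi')$. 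Second, by G\"odel's second incompleteness theorem $\varphi\wedge\mathsf{Con}(\varphi)$ strictly implies $\varphi$ for every consistent $\varphi$, so $\langle\varphi,\varphi\wedge\mathsf{Con}(\varphi)\rangle$ is a genuine input to the density function whenever $\varphi$ is consistent.

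Now I would verify the two required properties of $f$. Monotonicity: if $\varphi\vdash\varphi'$, then by the first fact $\varphi\wedge\mathsf{Con}(\varphi)\vdash\varphi'\wedge\mathsf{Con}(\varphi')$, so $\langle\varphi,\varphi\wedge\mathsf{Con}(\varphi)\rangle$ is coordinatewise below $\langle\varphi',\varphi'\wedge\mathsf{Con}(\varphi')\rangle$, and joint monotonicity of $g$ yields $f(\varphi)\vdash f(\varphi')$. Strength: for consistent $\varphi$, the second fact lets us apply the density property of $g$ to $\langle\varphi,\varphi\wedge\mathsf{Con}(\varphi)\rangle$, which says exactly that $\varphi\wedge\mathsf{Con}(\varphi)$ strictly implies $f(\varphi)$ and $f(\varphi)$ strictly implies $\varphi$. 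Thus $f$ is a monotonic function satisfying clauses (i) and (ii) of Corollary~\ref{no push down} --- a contradiction. The argument is short once Corollary~\ref{no push down} is in hand; the only point that needs a moment's care is confirming that feeding the monotonic map $\varphi\mapsto\varphi\wedge\mathsf{Con}(\varphi)$ into the second coordinate of $g$ leaves $f$ monotonic, which is where the joint monotonicity hypothesis on $g$ gets used.
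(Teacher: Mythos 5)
Your proof is correct and follows essentially the same route as the paper: define $f(\varphi):=g(\langle\varphi,\varphi\wedge\mathsf{Con}(\varphi)\rangle)$ and note that monotonicity of $\varphi\mapsto\varphi\wedge\mathsf{Con}(\varphi)$ together with joint monotonicity of $g$ gives a monotonic $f$ strictly between the identity and $\mathsf{Con}$, contradicting Corollary~\ref{no push down}. The paper states this more tersely, but the argument is identical.
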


\begin{proof}
Suppose there were such a function $g$ over the Lindenbaum algebra of $\mathsf{EA}$. Then given any input of the form $\langle\varphi,(\varphi\wedge\mathsf{Con}(\varphi))\rangle$, $g$ would produce a sentence with deductive strength strictly between $\varphi$ and $(\varphi\wedge \mathsf{Con}(\varphi))$. We then note that $f:\varphi\mapsto g(\langle\varphi,(\varphi\wedge \mathsf{Con}(\varphi))\rangle)$ is monotonic, but that for every consistent $\varphi$, $\varphi\wedge\mathsf{Con}(\varphi)$ strictly implies $f(\varphi)$ and $f(\varphi)$ strictly implies $\varphi$, contradicting the previous theorem.
\end{proof}

Our negative answer to the question raised by Shavrukov and Visser makes use of a $\Pi^0_2$ sentence $\forall\zeta(\mathsf{Con}(\zeta)\rightarrow \mathsf{Con}(\zeta \wedge \neg f(\zeta)))$. Shavrukov and Visser raised the following question in private communication.

\begin{question}
Is there a recursive uniform density function for the lattice of $\Pi^0_1$ sentences over $\mathsf{EA}$ that is monotonic in both its coordinates?
\end{question}

\begin{remark}
\label{fff}
It is clear from the proof of the lemma that any monotonic $f$ meeting the hypotheses of Theorem \ref{first} is not only cofinally equivalent to $\mathsf{Con}$; for every true $\psi$ that implies $$\chi:=\forall \zeta(\mathsf{Con}(\zeta)\rightarrow \mathsf{Con}(\zeta \wedge \neg f(\zeta))),$$ there is a true $\theta$ such that $\theta\vdash\psi$ and $[\psi\wedge\mathsf{Con}(\psi)]=[\theta\wedge\mathsf{Con}(\theta)]=[f(\theta)]$.
\end{remark}

This observation points the way toward a corollary of our theorem; namely that any monotonic function strictly meeting the hypotheses of the theorem must have the same range as $\varphi\mapsto(\varphi\wedge \mathsf{Con}(\varphi))$ in the limit. To prove this, we first prove a version of jump inversion---$\varphi \mapsto (\varphi\wedge\mathsf{Con}(\varphi))$ inversion---for Lindenbaum algebras. This is to say that the range of $\mathsf{Con}$ contains a true ideal in the Lindenbaum algebra. A similar result is established for true $\Pi^0_2$ sentences in \cite{andrews2015structure}.

\begin{proposition}
Suppose $\varphi\vdash \mathsf{Con}(\top)$. Then for some $\psi$, $[\varphi]=[(\psi\wedge \mathsf{Con}(\psi))]$.
\end{proposition}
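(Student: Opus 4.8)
The plan is to prove a form of jump inversion. I would cook up, by self-reference, a sentence $\psi$ such that $\mathsf{EA}+\psi$ is consistent ``for the same reason $\mathsf{EA}$ is'' — so that $\mathsf{Con}(\top)$ already secures $\mathsf{Con}(\psi)$ — while feeding $\mathsf{Con}(\psi)$ back into $\psi$ recovers exactly the strength of $\varphi$. Concretely, using G\"odel's fixed point lemma, fix a sentence $\psi$ with
$$\mathsf{EA}\vdash\psi\leftrightarrow\big(\mathsf{Con}(\psi)\to\varphi\big),$$
where $\mathsf{Con}(\psi)$ is the usual arithmetization of the consistency of $\mathsf{EA}+\psi$; the diagonal lemma applies because $\mathsf{Con}(x)$ is an elementary formula of the code $x$. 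Note this is $\mathsf{EA}$-equivalent to $\mathsf{EA}\vdash\neg\psi\leftrightarrow(\mathsf{Con}(\psi)\wedge\neg\varphi)$.

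Then I would establish $[\varphi]=[\psi\wedge\mathsf{Con}(\psi)]$ by two implications. The implication $\psi\wedge\mathsf{Con}(\psi)\vdash\varphi$ is immediate from the fixed-point equivalence by modus ponens, and $\varphi\vdash\psi$ is immediate because $\varphi\vdash(\mathsf{Con}(\psi)\to\varphi)$. The remaining implication, $\varphi\vdash\mathsf{Con}(\psi)$, is where the hypothesis $\varphi\vdash\mathsf{Con}(\top)$ enters, and it is the heart of the matter: it suffices to prove $\mathsf{EA}\vdash\mathsf{Con}(\top)\to\mathsf{Con}(\psi)$, i.e.\ $\mathsf{EA}\vdash\neg\mathsf{Con}(\psi)\to\neg\mathsf{Con}(\top)$. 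Reasoning inside $\mathsf{EA}$, assume $\neg\mathsf{Con}(\psi)$, which unwinds to $\mathsf{Prov}_{\mathsf{EA}}(\neg\psi)$. On the one hand, $\neg\mathsf{Con}(\psi)$ is $\Sigma^0_1$, so provable $\Sigma^0_1$-completeness gives $\mathsf{Prov}_{\mathsf{EA}}(\neg\mathsf{Con}(\psi))$. On the other hand, $\mathsf{EA}$ proves $\neg\psi\to\mathsf{Con}(\psi)$, so the derivability conditions turn $\mathsf{Prov}_{\mathsf{EA}}(\neg\psi)$ into $\mathsf{Prov}_{\mathsf{EA}}(\mathsf{Con}(\psi))$. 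Together $\mathsf{Prov}_{\mathsf{EA}}(\neg\mathsf{Con}(\psi))$ and $\mathsf{Prov}_{\mathsf{EA}}(\mathsf{Con}(\psi))$ yield $\mathsf{Prov}_{\mathsf{EA}}(\bot)$, that is, $\neg\mathsf{Con}(\top)$.

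The main obstacle is precisely this last step: one must see that the self-referential clause in $\psi$ makes $\mathsf{EA}+\psi$ no harder to prove consistent than $\mathsf{EA}$ itself — a formalized instance of the second incompleteness phenomenon — and one must be careful to run the whole argument inside $\mathsf{EA}$, leaning on provable $\Sigma^0_1$-completeness and on the $\mathsf{EA}$-provability of the fixed-point equivalence and its consequences. Everything else is routine propositional bookkeeping with the fixed-point equivalence; in particular the degenerate case in which $\mathsf{EA}+\varphi$ is inconsistent is absorbed automatically, since then all three implications hold trivially and $\psi\wedge\mathsf{Con}(\psi)$ is inconsistent as well. Combining the two implications gives $[\varphi]=[\psi\wedge\mathsf{Con}(\psi)]$, completing the proof.
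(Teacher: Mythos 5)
Your proof is correct, but it takes a genuinely different route from the paper's. The paper avoids self-reference entirely: it simply sets $\psi := \mathsf{Con}(\top)\rightarrow\varphi$. Then $\varphi\vdash\psi$ is trivial, $\varphi\vdash\mathsf{Con}(\psi)$ follows in one step from the formalized second incompleteness theorem $\mathsf{Con}(\top)\vdash\mathsf{Con}(\neg\mathsf{Con}(\top))$ together with the monotonicity of $\mathsf{Con}$ (since $\psi$ is weaker than $\neg\mathsf{Con}(\top)$), and $\psi\wedge\mathsf{Con}(\psi)\vdash\varphi$ follows from $\mathsf{Con}(\psi)\vdash\mathsf{Con}(\top)$. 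Your construction instead takes a fixed point $\psi\leftrightarrow(\mathsf{Con}(\psi)\rightarrow\varphi)$ and re-derives the crucial implication $\mathsf{Con}(\top)\vdash\mathsf{Con}(\psi)$ by hand, using provable $\Sigma^0_1$-completeness and the derivability conditions; setting $\varphi=\bot$ shows that this step is, in effect, a proof of the formalized second incompleteness theorem rather than an appeal to it. Both approaches are sound. The paper's version is shorter and needs no diagonal lemma, at the cost of invoking formalized G\"odel~II as a black box; yours is more self-contained but requires the extra care you correctly identified in running the $\Sigma^0_1$-completeness argument inside $\mathsf{EA}$. One small inaccuracy: the applicability of the diagonal lemma has nothing to do with $\mathsf{Con}(x)$ being elementary --- it applies to any arithmetical formula --- though this does not affect the argument.
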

 
\begin{proof}
Let $\psi:=\mathsf{Con}(\top)\rightarrow\varphi$.

\begin{claim}
$\varphi\vdash(\psi\wedge \mathsf{Con}(\psi))$.
\end{claim}

Trivially, $\varphi\vdash\psi$. Since $\varphi\vdash \mathsf{Con}(\top)$, it follows that from the formalized second incompleteness theorem, i.e., $\mathsf{Con}(\top)\vdash\mathsf{Con}(\neg\mathsf{Con}(\top))$, that $\varphi\vdash \mathsf{Con}(\neg \mathsf{Con}(\top))$. But $\neg\mathsf{Con}(\top)$ is the first disjunct of $\psi$, so $\varphi\vdash \mathsf{Con}(\psi)$.

\begin{claim}
$(\psi\wedge \mathsf{Con}(\psi))\vdash\varphi$.
\end{claim}

Note that $\mathsf{Con}(\psi)\vdash \mathsf{Con}(\top)$. The claim then follows since clearly $(\psi\wedge \mathsf{Con}(\top))\vdash\varphi$.
\end{proof}

\begin{corollary}
Let $f$ be monotonic. Suppose that for all consistent $\varphi$,\\
(i) $\varphi\wedge \mathsf{Con}(\varphi)$ implies $f(\varphi)$ and\\
(ii) $f(\varphi)$ strictly implies $\varphi$.\\
Then the intersection of the ranges of $f$ and $\mathsf{Con}$ in the Lindenbaum algebra contains a true ideal.
\end{corollary}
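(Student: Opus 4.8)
The plan is to exhibit a single true sentence $\varphi_0$ with the property that every $\varphi$ satisfying $\varphi\vdash\varphi_0$ is realized in both ranges simultaneously, i.e.\ $[\varphi]=[\theta\wedge\mathsf{Con}(\theta)]=[f(\theta)]$ for a suitable $\theta=\theta(\varphi)$; then $\{[\varphi]:\varphi\vdash\varphi_0\}$ is the true ideal we want. The right choice is $\varphi_0:=\chi\wedge\mathsf{Con}(\chi)$, where $\chi:=\forall\zeta(\mathsf{Con}(\zeta)\rightarrow(\zeta\nvdash f(\zeta)))$ is the true sentence used in the proof of Lemma \ref{bbb}; this $\varphi_0$ is true because $\mathbb{N}\models\mathsf{EA}+\chi$, so $\mathsf{Con}(\chi)$ is true.

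Step one is a relativized form of the preceding $\mathsf{Con}$-inversion proposition. Given $\varphi\vdash\varphi_0$, set $\eta:=\chi\wedge(\mathsf{Con}(\chi)\rightarrow\varphi)$; this is the preimage $\mathsf{Con}(\top)\rightarrow\varphi$ of that proposition, rebuilt around $\mathsf{Con}(\chi)$ in place of $\mathsf{Con}(\top)$ and with $\chi$ conjoined in (so that automatically $\eta\vdash\chi$). The proof of that proposition adapts directly, the formalized fact now being G\"odel's second incompleteness theorem for $\mathsf{EA}+\chi$, i.e.\ $\mathsf{Con}(\chi)\vdash\mathsf{Con}(\chi\wedge\neg\mathsf{Con}(\chi))$; one concludes that $\eta$ is consistent and that $[\eta\wedge\mathsf{Con}(\eta)]=[\varphi]$, and the two conjuncts $\varphi\vdash\chi$ and $\varphi\vdash\mathsf{Con}(\chi)$ of $\varphi\vdash\varphi_0$ are exactly what get used. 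In particular $[\varphi]$ already lies in the range of $\mathsf{Con}$.

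Step two is to rerun the proof of Lemma \ref{bbb} starting from $\eta$ rather than from a true sentence (here $\eta$ plays the role that $\varphi\wedge\chi$ plays there: it is consistent and entails $\chi$). Put $\theta:=\eta\wedge(f(\eta)\rightarrow\mathsf{Con}(\eta))$. Because $\eta$ need not be true, the only new point is that $\theta$ is consistent, and this is where the \emph{strictness} in hypothesis (ii) is used: were $\theta$ inconsistent we would get $\eta\vdash f(\eta)$, contradicting that $f(\eta)$ strictly implies the consistent sentence $\eta$. With $\theta$ consistent and $\eta\vdash\chi$, the three claims in the proof of Lemma \ref{bbb} go through verbatim and give $f(\theta)\vdash\theta\wedge\mathsf{Con}(\theta)$; by hypothesis (i) this yields $[f(\theta)]=[\theta\wedge\mathsf{Con}(\theta)]$, and following the chain $f(\theta)\vdash\theta\wedge f(\eta)\vdash\eta\wedge\mathsf{Con}(\eta)\vdash\theta\wedge\mathsf{Con}(\theta)\vdash f(\theta)$ shows all of these are equal, so $[f(\theta)]=[\eta\wedge\mathsf{Con}(\eta)]=[\varphi]$. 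Hence $[\varphi]$ lies in the range of $f$ as well. Since the construction is uniform in $\varphi$ — including when $\varphi$ is inconsistent, in which case $\eta=\chi\wedge\neg\mathsf{Con}(\chi)$ — every $[\varphi]$ with $\varphi\vdash\varphi_0$ lies in both ranges, which completes the proof.

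The main obstacle is reconciling two competing demands on the preimages. The diagonal construction of Lemma \ref{bbb} only has purchase on sentences that entail $\chi$, so the $\mathsf{Con}$-inversion has to be arranged so that its output $\eta$ lands inside the cone of $\chi$ — which is what forces the relativized version built around $\mathsf{Con}(\chi)$. At the same time, realizing an \emph{arbitrary} member of a true ideal, rather than just a cofinal family of true sentences as in Theorem \ref{first} and Remark \ref{fff}, means the argument must accommodate $\varphi$ that are false, or even inconsistent; keeping $\theta$ consistent in that generality is exactly what the strict version of hypothesis (ii) secures.
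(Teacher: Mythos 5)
Your proof is correct, and it follows the same broad strategy as the paper---combine a version of $\mathsf{Con}$-inversion with the Lemma \ref{bbb} diagonalization so that every class below a fixed true sentence lands in both ranges---but you have worked out a point the paper's proof treats very tersely. The paper applies the $\mathsf{Con}$-inversion proposition as stated, producing $\psi:=\mathsf{Con}(\top)\rightarrow\varphi$, and then invokes Remark \ref{fff} for that $\psi$; but Remark \ref{fff} requires its input to be true and to imply $\chi$, and the $\psi$ the proposition hands back need satisfy neither condition (it is false when $\varphi$ is false, and never implies $\chi$, since $\neg\mathsf{Con}(\top)\vdash\psi$). Your relativized inversion $\eta:=\chi\wedge(\mathsf{Con}(\chi)\rightarrow\varphi)$ is exactly the right repair: it forces $\eta\vdash\chi$ by construction, and the second incompleteness theorem for $\mathsf{EA}+\chi$ (namely $\mathsf{Con}(\chi)\vdash\mathsf{Con}(\chi\wedge\neg\mathsf{Con}(\chi))$) together with the two conjuncts of $\varphi_0:=\chi\wedge\mathsf{Con}(\chi)$ give $[\eta\wedge\mathsf{Con}(\eta)]=[\varphi]$. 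You also correctly flag that $\eta$ may be false (or, when $\varphi$ is inconsistent, equal to $\chi\wedge\neg\mathsf{Con}(\chi)$), so $\theta:=\eta\wedge(f(\eta)\rightarrow\mathsf{Con}(\eta))$ is not automatically true; your separate check that $\theta$ is consistent, using the strictness in hypothesis (ii), is the extra step needed to make the Lemma \ref{bbb} claims apply. The net effect is a clean proof that every $[\varphi]$ with $\varphi\vdash\varphi_0$ lies in both ranges, which is precisely what the corollary asserts.
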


\begin{proof}
Let $\varphi$ be a sentence such that $\varphi\vdash \mathsf{Con}(\top)$ and 
$$\varphi\vdash \forall \zeta(\mathsf{Con}(\zeta)\rightarrow \mathsf{Con}(\zeta \wedge \neg f(\zeta))).$$ 
Note that both of these sentences are true, and hence $\varphi$ is in an element of a true ideal. By the previous proposition, there is a $\psi$ such that $[\psi\wedge \mathsf{Con}(\psi)]=[\varphi]$. By Remark \ref{fff} there is a $\theta$ such that $[f(\theta)]=[\psi\wedge \mathsf{Con}(\psi)]$, that is, $\varphi$ is in the range of $f$.
\end{proof}

\section{Iterating $\mathsf{Con}$ into the transfinite}

By analogy with Martin's Conjecture, we would like to show that there is a natural well-ordered hierarchy of monotonic functions and that the successor for this well-ordering is induced by $\mathsf{Con}$. Thus, we define the iterates of $\mathsf{Con}$ along elementary presentations of well-orderings.

\begin{definition}
\label{elementary}
By an \emph{elementary presentation} of a recursive well-ordering we mean a pair $(\mathcal{D},<)$ of elementary formulas, such that (i) the relation $<$ well-orders $\mathcal{D}$ in the standard model of arithmetic and (ii) $\mathsf{EA}$ proves that $<$ linearly orders the elements satisfying $\mathcal{D}$, (iii) it is elementarily calculable whether an element represents zero or a successor or a limit and (iv) the elementary formulas defining the set of limit ordinals and the successor relation provably in $\mathsf{EA}$ satisfy their corresponding first order definitions in terms of $<$.
\end{definition}

\begin{definition}
\label{iterates}
Given an elementary presentation $\langle\alpha,<\rangle$ of a recursive well-ordering and a sentence $\varphi$, we use G\"{o}del's fixed point lemma to define sentences $\mathbf{Con}^\star(\varphi,\beta)$ for $\beta<\alpha$ as follows.
$$\mathsf{EA}\vdash \mathbf{Con}^\star(\varphi,\beta)\leftrightarrow \forall\gamma<\beta, \mathsf{Con}(\varphi\wedge\mathbf{Con}^\star(\varphi,\gamma)).$$
We use the notation $\mathsf{Con}^\beta(\varphi)$ for $\mathbf{Con}^\star(\varphi,\beta)$.
\end{definition}

\begin{remark}
Note that, since the following clauses are provable in $\mathsf{EA}$.
\begin{itemize}
\item $\mathsf{Con}^0(\varphi) \leftrightarrow \top$
\item $\mathsf{Con}^{\gamma+1}(\varphi) \leftrightarrow \mathsf{Con}(\varphi\wedge\mathsf{Con}^\gamma(\varphi))$
\item $\mathsf{Con}^{\lambda}(\varphi) \leftrightarrow \forall\gamma<\lambda, \mathsf{Con}^{\gamma}(\varphi)$ for $\lambda$ a limit.
\end{itemize}
\end{remark}

Note that this hierarchy is proper for true $\varphi$ by G\"{o}del's second incompleteness theorem. We need to prove that for transfinite $\alpha$, $\mathsf{Con}^\alpha$ is monotonic over the Lindenbaum algebra of $\mathsf{EA}$. Before proving this claim we recall Schmerl's \cite{schmerl1979fine} technique of \emph{reflexive transfinite induction}. Note that ``$\mathsf{Pr}(\varphi)$'' means that $\varphi$ is provable in $\mathsf{EA}$.

\begin{proposition}
\label{RTI}
(Schmerl) Suppose that $<$ is an elementary linear order and that $\mathsf{EA}\vdash\forall\alpha(\mathsf{Pr}(\forall\beta<\alpha ,A(\beta))\rightarrow A(\alpha))$. Then $\mathsf{EA}\vdash\forall\alpha A(\alpha)$.
\end{proposition}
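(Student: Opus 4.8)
The plan is to derive the principle from L\"ob's theorem, exploiting that $\mathsf{EA}$ verifies the Hilbert--Bernays--L\"ob derivability conditions for $\mathsf{Pr}$. Write $S$ for the sentence $\forall\alpha\, A(\alpha)$; since $S$ has no free variables, L\"ob's theorem will apply to it. The whole argument then reduces to establishing
$$\mathsf{EA}\vdash \mathsf{Pr}(S)\rightarrow S,$$
for once we have this, L\"ob's theorem yields $\mathsf{EA}\vdash S$, which is exactly the desired $\mathsf{EA}\vdash\forall\alpha\, A(\alpha)$. (Notice that well-foundedness of $<$ is never used anywhere; all that matters is that $<$ is elementary, so that $\forall\beta<\dot\alpha\, A(\beta)$ is a bona fide elementary formula with free variable $\alpha$ and the syntactic operations on it are $\mathsf{EA}$-provably sound.)

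To prove $\mathsf{EA}\vdash\mathsf{Pr}(S)\rightarrow S$, I would argue inside $\mathsf{EA}$ under the assumption $\mathsf{Pr}(S)$. The crux is to upgrade this single provability assertion to the parametrized family $\forall\alpha\,\mathsf{Pr}(\forall\beta<\dot\alpha\, A(\beta))$, where $\dot\alpha$ is the numeral of $\alpha$. For this, observe that $\mathsf{EA}$ proves $S\rightarrow\forall\beta<\alpha\, A(\beta)$ with $\alpha$ free---this is pure logic, as $S$ is $\forall\gamma\, A(\gamma)$---and hence, by the standard fact that $\mathsf{EA}$-provability is provably preserved under uniform substitution of numerals for a free variable, $\mathsf{EA}\vdash\forall\alpha\,\mathsf{Pr}(S\rightarrow\forall\beta<\dot\alpha\, A(\beta))$. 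Combining this with $\mathsf{Pr}(S)$ and the $\mathsf{EA}$-provable distributivity of $\mathsf{Pr}$ over implication gives $\forall\alpha\,\mathsf{Pr}(\forall\beta<\dot\alpha\, A(\beta))$. Now the hypothesis $\mathsf{EA}\vdash\forall\alpha(\mathsf{Pr}(\forall\beta<\dot\alpha\, A(\beta))\rightarrow A(\alpha))$ delivers $\forall\alpha\, A(\alpha)$, that is, $S$. Discharging the assumption completes the derivation of $\mathsf{EA}\vdash\mathsf{Pr}(S)\rightarrow S$, and we finish via L\"ob's theorem as above.

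The step I expect to require the most care is this middle one: passing from $\mathsf{Pr}(S)$, which concerns the provability of one fixed sentence, to the family $\forall\alpha\,\mathsf{Pr}(\forall\beta<\dot\alpha\, A(\beta))$ with the numeral $\dot\alpha$ inserted uniformly. This rests on the routine but not entirely trivial fact that, under a standard arithmetization of syntax, $\mathsf{EA}$ can verify uniformly in $\alpha$ that substituting $\dot\alpha$ for the free variable throughout a proof of $\theta(x)$ yields a proof of $\theta(\dot\alpha)$, together with the $\mathsf{EA}$-provable closure of $\mathsf{Pr}$ under modus ponens. Once this is in place, the appeal to L\"ob's theorem is immediate, and no genuinely transfinite-induction argument is needed.
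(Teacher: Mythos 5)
Your proof is correct and takes essentially the same route as the paper: both reduce the claim to $\mathsf{EA}\vdash\mathsf{Pr}(\forall\alpha\,A(\alpha))\rightarrow\forall\alpha\,A(\alpha)$ and then invoke L\"ob's theorem. What you spell out in your middle paragraph---upgrading $\mathsf{Pr}(\forall\alpha\,A(\alpha))$ to $\forall\alpha\,\mathsf{Pr}(\forall\beta<\dot\alpha\,A(\beta))$ via the provable numeral-substitution property and closure of $\mathsf{Pr}$ under modus ponens---is precisely the step the paper compresses into a single line.
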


\begin{proof}
From $\mathsf{EA}\vdash\forall\alpha(\mathsf{Pr}(\forall\beta<\alpha ,A(\beta))\rightarrow A(\alpha))$ we infer
\begin{flalign*}
\mathsf{EA}\vdash \mathsf{Pr}(\forall\alpha A(\alpha))&\rightarrow\forall\alpha\mathsf{Pr}(\forall\beta<\alpha,A(\beta))\\
&\rightarrow\forall\alpha A(\alpha).
\end{flalign*}
L\"{o}b's theorem, i.e., 
$$\textrm{if } \mathsf{EA} \vdash \mathsf{Pr}(\zeta)\rightarrow\zeta \textrm{, then } \mathsf{EA}\vdash \zeta,$$ 
then yields $\mathsf{EA}\vdash\forall\alpha A(\alpha)$.
\end{proof}

\begin{proposition}
\label{Mono}
If $\varphi\vdash\psi$, then $\mathsf{Con}^\alpha(\varphi)\vdash \mathsf{Con}^\alpha(\psi)$.
\end{proposition}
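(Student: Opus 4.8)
The plan is to prove the statement $\varphi \vdash \psi \Rightarrow \mathsf{Con}^\alpha(\varphi)\vdash\mathsf{Con}^\alpha(\psi)$ by reflexive transfinite induction (Proposition \ref{RTI}), with the property being inducted on the ordinal parameter $\beta$ rather than $\alpha$ itself. Concretely, fix $\varphi,\psi$ with $\mathsf{EA}\vdash\varphi\to\psi$, and let $A(\beta)$ be the formula $\mathsf{Con}^\beta(\varphi)\to\mathsf{Con}^\beta(\psi)$. We want $\mathsf{EA}\vdash\forall\beta A(\beta)$, from which the case $\beta=\alpha$ gives the proposition. By Proposition \ref{RTI} it suffices to show
$$\mathsf{EA}\vdash\forall\beta\big(\mathsf{Pr}(\forall\gamma<\beta,A(\gamma))\to A(\beta)\big).$$

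So I would reason inside $\mathsf{EA}$, fix $\beta$, assume $\mathsf{Pr}(\forall\gamma<\beta,A(\gamma))$, and assume $\mathsf{Con}^\beta(\varphi)$; the goal is $\mathsf{Con}^\beta(\psi)$. Unfolding the definition, $\mathsf{Con}^\beta(\psi)$ says $\forall\gamma<\beta\,\mathsf{Con}(\psi\wedge\mathsf{Con}^\gamma(\psi))$, so fix $\gamma<\beta$ and aim to derive $\mathsf{Con}(\psi\wedge\mathsf{Con}^\gamma(\psi))$. From $\mathsf{Con}^\beta(\varphi)$ we get $\mathsf{Con}(\varphi\wedge\mathsf{Con}^\gamma(\varphi))$. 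Now I want to transfer consistency from $\varphi\wedge\mathsf{Con}^\gamma(\varphi)$ to $\psi\wedge\mathsf{Con}^\gamma(\psi)$, which amounts to showing $\mathsf{Pr}\big((\varphi\wedge\mathsf{Con}^\gamma(\varphi))\to(\psi\wedge\mathsf{Con}^\gamma(\psi))\big)$ and then using the fact that $\mathsf{EA}$ proves that if $\theta_0$ provably implies $\theta_1$ then $\mathsf{Con}(\theta_0)\to\mathsf{Con}(\theta_1)$ (formalized provable monotonicity of $\mathsf{Con}$, a standard fact about the arithmetized provability predicate). For the provable implication: $\mathsf{Pr}(\varphi\to\psi)$ holds outright (it is a true $\Sigma^0_1$ fact, provable in $\mathsf{EA}$), and $\mathsf{Pr}(\mathsf{Con}^\gamma(\varphi)\to\mathsf{Con}^\gamma(\psi))$ is exactly $\mathsf{Pr}(A(\gamma))$, which follows from our hypothesis $\mathsf{Pr}(\forall\gamma<\beta,A(\gamma))$ together with $\gamma<\beta$ (using provable instantiation, again formalizable in $\mathsf{EA}$). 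Combining these by provable conjunction gives $\mathsf{Pr}\big((\varphi\wedge\mathsf{Con}^\gamma(\varphi))\to(\psi\wedge\mathsf{Con}^\gamma(\psi))\big)$, hence $\mathsf{Con}(\varphi\wedge\mathsf{Con}^\gamma(\varphi))\to\mathsf{Con}(\psi\wedge\mathsf{Con}^\gamma(\psi))$, which closes the argument.

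I should take some care with the limit/successor/zero bookkeeping, but the virtue of doing everything via the single defining equivalence $\mathsf{Con}^\beta(\varphi)\leftrightarrow\forall\gamma<\beta\,\mathsf{Con}(\varphi\wedge\mathsf{Con}^\gamma(\varphi))$ is that the case distinction becomes invisible: the argument above is uniform in $\beta$ and never needs to know whether $\beta$ is zero, a successor, or a limit. The zero case is automatic since $\mathsf{Con}^0$ is $\top$ for both arguments.

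The main obstacle — really the only delicate point — is making sure that the formalized facts I invoke about the provability predicate are genuinely available in $\mathsf{EA}$ and are being applied with the right scope of quantifiers: namely (a) provable monotonicity of $\mathsf{Con}$, i.e. $\mathsf{EA}\vdash\mathsf{Pr}(\theta_0\to\theta_1)\to(\mathsf{Con}(\theta_0)\to\mathsf{Con}(\theta_1))$, and (b) the passage from $\mathsf{Pr}(\forall\gamma<\beta,A(\gamma))$ to $\mathsf{Pr}(A(\gamma))$ for a fixed $\gamma<\beta$, which uses that $\mathsf{EA}$ proves its own $\Sigma^0_1$-completeness enough to internalize the bounded instantiation $\gamma<\beta$. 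Both are standard for $\mathsf{EA}$ (see \cite{beklemishev2005reflection}), so once the structure of the reflexive-transfinite-induction argument is in place the remaining steps are routine; I would state them as lemmas or simply cite the arithmetization facts and carry out the reasoning informally inside $\mathsf{EA}$ as above.
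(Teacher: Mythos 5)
Your proof is correct and follows essentially the same route as the paper's: both apply Schmerl's reflexive transfinite induction (Proposition~\ref{RTI}) and then reason inside $\mathsf{EA}$ from the arithmetized inductive hypothesis. The only cosmetic difference is that the paper feeds the $\Sigma^0_1$ formula $\mathcal{A}(\beta)\equiv\mathsf{Pr}(\mathsf{Con}^\beta(\varphi)\to\mathsf{Con}^\beta(\psi))$ into RTI and finishes via formalized $\Sigma^0_1$-completeness, whereas you feed in the unprovabilified implication $A(\beta)\equiv\mathsf{Con}^\beta(\varphi)\to\mathsf{Con}^\beta(\psi)$ and finish via provable monotonicity of $\mathsf{Con}$ while carefully tracking the conjuncts $\varphi\wedge\mathsf{Con}^\gamma(\varphi)$ and $\psi\wedge\mathsf{Con}^\gamma(\psi)$ through the defining equivalence — a slightly tidier bookkeeping that makes the final identification with $\mathsf{Con}^\beta(\psi)$ transparent.
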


\begin{proof}
Let $\mathcal{A}(\beta)$ denote the claim that $\mathsf{Con}^\beta(\varphi)\vdash \mathsf{Con}^\beta(\psi)$.

We want to prove that $\mathcal{A}(\alpha)$, without placing any restrictions on $\alpha$. We prove the equivalent claim that $\mathsf{EA}\vdash\mathcal{A}(\alpha)$. By Proposition \ref{RTI}, it suffices to show that
$$\mathsf{EA}\vdash\forall\alpha(\mathsf{Pr}(\forall\beta<\alpha,\mathcal{A}(\beta))\rightarrow \mathcal{A}(\alpha)).$$
\textbf{Reason within $\mathsf{EA}$.} Suppose that $\mathsf{Pr}(\forall\beta<\alpha,\mathcal{A}(\beta))$, which is to say that 
$$\mathsf{Pr}(\forall\beta<\alpha,\mathsf{Pr} (\mathsf{Con}^\beta(\varphi) \rightarrow \mathsf{Con}^\beta(\psi))).$$
Since $\mathsf{Con}^\alpha(\varphi)$ contains $\mathsf{EA}$, we infer that
$$\mathsf{Con}^\alpha(\varphi)\vdash\forall\beta<\alpha \mathsf{Pr}(\mathsf{Con}^\beta(\varphi) \rightarrow \mathsf{Con}^\beta(\psi)).$$
Since $\mathsf{Con}^\alpha(\varphi)$ proves that for all $\beta<\alpha$, $\mathsf{EA}\nvdash\neg\mathsf{Con}^\beta(\varphi)$ we infer that
$$\mathsf{Con}^\alpha(\varphi)\vdash \forall\beta<\alpha\mathsf{Con}(\mathsf{Con}^\beta(\psi)).$$
Thus, 
$$\mathsf{Con}^\alpha(\varphi)\vdash \forall\beta<\alpha(\mathsf{Con}^\beta(\psi)).$$
This concludes the proof of the proposition.
\end{proof}

Thus, for each predicate $\mathsf{Con}^\alpha$ the function 
$$\varphi\mapsto(\varphi\wedge \mathsf{Con}^\alpha(\varphi))$$
 is monotonic over the Lindenbaum algebra of $\mathsf{EA}$.
 
In this section we show that the functions given by iterated consistency are minimal with respect to each other. We fix an elementary presentation $\alpha$ of a recursive well-ordering. We assume that $f$ is a monotonic function such that for every consistent $\varphi$, $f(\varphi)$ strictly implies $\varphi\wedge \mathsf{Con}^\beta(\varphi)$ for all $\beta<\alpha.$ We would like to relativize the proof of Lemma \ref{bbb} to $\mathsf{Con}^\beta$. However, the proof of Lemma \ref{bbb} relied on the truth of the principle 
$$\forall\zeta(\mathsf{Con}(\zeta)\rightarrow \mathsf{Con}(\zeta\wedge\neg f(\zeta))).$$ It is not in general clear that $\mathsf{Con}^\alpha(\varphi)$ implies $\mathsf{Con}^\alpha(\varphi\wedge\neg f(\varphi))$. To solve this problem, we define a sequence of true sentences $(\theta_\beta)_{\beta\leq \alpha}$ such that for every sentence $\varphi$, if $\varphi\vdash\theta_\beta$ then $\mathsf{Con}^\beta(\varphi)$ implies $\mathsf{Con}^\beta(\varphi\wedge\neg f(\varphi))$. Thus, we are able to relativize the proof of Lemma \ref{bbb} for $\mathsf{Con}^\beta$ to sentences that imply $\theta_\beta$. 

\begin{definition}\label{sentence sequence}

Given an elementary presentation $\alpha$ of a recursive well-ordering, we use G\"{o}del's fixed point lemma to define sentences $\mathbf{\theta}^\star(\beta)$ for $\beta<\alpha$ as follows.
\begin{flalign*}
\mathsf{EA}\vdash& \mathbf{\theta}^\star(\beta)\leftrightarrow \\
& \forall\gamma<\beta (\mathsf{True}_{\Pi_3}(\theta^\star(\gamma))) \wedge \forall\zeta\Big{(}\big{(}\forall\gamma<\beta\mathsf{Pr}(\zeta\rightarrow\theta^\star(\gamma))\big{)} \rightarrow \big{(}\mathsf{Con}^\beta(\zeta) \rightarrow \mathsf{Con}^\beta(\zeta\wedge\neg f(\zeta))\big{)}\Big{)}.
\end{flalign*}

We use the notation $\theta_\beta$ for $\theta^\star(\beta)$.

\end{definition}

\begin{remark}
Note that every sentence in the sequence $(\theta_\beta)_{\beta\leq\alpha}$ has complexity $\Pi^0_3$. Note moreover that for a successor $\beta+1$, $\theta_{\beta+1}$ is equivalent to 
$$\theta_\beta\wedge\forall\zeta\big{(} \mathsf{Pr}(\zeta\rightarrow \theta_\beta) \rightarrow \big{(}\mathsf{Con}^{\beta+1}(\zeta)\rightarrow \mathsf{Con}^{\beta+1}(\zeta\wedge\neg f(\zeta))\big{)}\big{)}.$$
\end{remark}

\begin{lemma}
Let $f$ be monotonic such that, for all $\varphi$, \\
(i) $\varphi\wedge \mathsf{Con}^\alpha(\varphi)$ implies $f(\varphi)$,\\
(ii) if $[f(\varphi)]\neq[\bot]$, then $f(\varphi)$ strictly implies $\varphi\wedge \mathsf{Con}^\beta(\varphi)$ for all $\beta<\alpha.$ \\
Then for each $\beta\leq \alpha$, the sentence $\theta_\beta$ is true.
\end{lemma}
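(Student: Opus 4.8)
The plan is to prove the claim by transfinite induction on $\beta\le\alpha$ along the given presentation; because the presentation orders its domain as a genuine well-order in the standard model, ordinary transfinite induction is available, and the inductive hypothesis will be that $\theta_\gamma$ is true for every $\gamma<\beta$. Conceptually $\theta_\beta$ plays here the role that the sentence $\chi$ plays in the proof of Lemma~\ref{bbb}: once $\theta_\beta$ is known to be true, that proof relativizes to $\mathsf{Con}^\beta$ on sentences implying $\theta_\beta$. The new difficulty relative to Lemma~\ref{bbb} is that $\theta_\beta$ is self-referential, which is precisely why it carries the ``$\mathsf{True}_{\Pi_3}$'' conjunct recording the truth of the earlier $\theta_\gamma$.

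Write $\theta_\beta$ as the conjunction of (a)~$\forall\gamma<\beta(\mathsf{True}_{\Pi_3}(\theta_\gamma))$ and (b)~$\forall\zeta\bigl[\,(\forall\gamma<\beta\,(\zeta\vdash\theta_\gamma))\rightarrow(\mathsf{Con}^\beta(\zeta)\rightarrow\mathsf{Con}^\beta(\zeta\wedge\neg f(\zeta)))\,\bigr]$. Conjunct (a) is true by the inductive hypothesis together with the correctness of the partial truth predicate on the (uniformly $\Pi^0_3$) sentences $\theta_\gamma$. For conjunct (b), fix a sentence $\zeta$ with $\zeta\vdash\theta_\gamma$ for all $\gamma<\beta$ and assume $\mathsf{Con}^\beta(\zeta)$ is true; we must show $\mathsf{Con}^\beta(\zeta\wedge\neg f(\zeta))$ is true, which by the definition of $\mathsf{Con}^\beta$ means showing that for each $\delta<\beta$ the theory $\mathsf{EA}+\zeta+\neg f(\zeta)+\mathsf{Con}^\delta(\zeta\wedge\neg f(\zeta))$ is consistent. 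If $[f(\zeta)]=[\bot]$ this is immediate, since then $\zeta\wedge\neg f(\zeta)$ is $\mathsf{EA}$-equivalent to $\zeta$ and hence $\mathsf{Con}^\beta(\zeta\wedge\neg f(\zeta))$ is $\mathsf{EA}$-equivalent to the true sentence $\mathsf{Con}^\beta(\zeta)$. Otherwise, since $\delta<\beta\le\alpha$, hypothesis (ii) gives that $f(\zeta)$ strictly implies $\zeta\wedge\mathsf{Con}^\delta(\zeta)$, so $\zeta\wedge\mathsf{Con}^\delta(\zeta)\nvdash f(\zeta)$; that is, $\mathsf{EA}+\zeta+\mathsf{Con}^\delta(\zeta)+\neg f(\zeta)$ is consistent. (Only hypothesis (ii) and the recursiveness of $f$ are used.)

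It remains to pass from the consistency of $\mathsf{EA}+\zeta+\mathsf{Con}^\delta(\zeta)+\neg f(\zeta)$ to that of $\mathsf{EA}+\zeta+\mathsf{Con}^\delta(\zeta\wedge\neg f(\zeta))+\neg f(\zeta)$. One inclusion of theorems is free: since $\zeta\wedge\neg f(\zeta)\vdash\zeta$, Proposition~\ref{Mono} gives $\mathsf{Con}^\delta(\zeta\wedge\neg f(\zeta))\vdash\mathsf{Con}^\delta(\zeta)$. For the other it suffices to establish the $\mathsf{EA}$-provable implication $\zeta\vdash\bigl(\mathsf{Con}^\delta(\zeta)\rightarrow\mathsf{Con}^\delta(\zeta\wedge\neg f(\zeta))\bigr)$: granting it, the two theories prove the same sentences, so the consistency just established transfers, completing the inductive step. (The base case $\beta=1$ is this argument with $\delta=0$, where $\mathsf{Con}^0\equiv\top$ makes the last passage trivial and conjunct (a) vacuous.)

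The main obstacle is exactly that provable implication, and it is what conjunct (a) of $\theta_\delta$ is engineered to yield. The idea is to reason inside $\mathsf{EA}+\zeta$: instantiate the bound sentence-variable in conjunct (b) of $\theta_\delta$ (a theorem of $\mathsf{EA}+\zeta$, since $\zeta\vdash\theta_\delta$ as $\delta<\beta$) at $\zeta$, obtaining ``$(\forall\gamma<\delta(\zeta\vdash\theta_\gamma))\rightarrow(\mathsf{Con}^\delta(\zeta)\rightarrow\mathsf{Con}^\delta(\zeta\wedge\neg f(\zeta)))$'', and then discharge the antecedent $\forall\gamma<\delta(\zeta\vdash\theta_\gamma)$ within $\mathsf{EA}+\zeta$. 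This uses two true $\Sigma^0_1$ statements, each therefore provable already in $\mathsf{EA}$: ``$\mathsf{EA}\vdash\zeta\rightarrow\theta_\delta$'', which holds by hypothesis, and ``$\mathsf{EA}\vdash\forall\gamma<\delta(\theta_\delta\rightarrow\theta_\gamma)$'', which follows from conjunct (a) of $\theta_\delta$ via the uniform Tarski biconditionals for the elementary family $(\theta_\gamma)$. Inside $\mathsf{EA}+\zeta$ one then combines these by formalized modus ponens and provable $\forall$-instantiation under the provability predicate to obtain $\forall\gamma<\delta(\zeta\vdash\theta_\gamma)$, hence the implication, after unwinding the coding terms (an elementary computation verified in $\mathsf{EA}$). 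The remaining ingredients---provable $\Sigma^0_1$-completeness of $\mathsf{EA}$, the arithmetized deduction theorem, and the standard behavior of $\mathsf{True}_{\Pi_3}$---are routine, and I expect most of the actual write-up to consist in making this propagation precise.
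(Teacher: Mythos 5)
Your proof is correct, and it establishes the lemma by a route that is structurally close to the paper's but not identical. Both proofs argue by transfinite induction along the presentation, both discharge conjunct~(a) of $\theta_\beta$ from the inductive hypothesis, and both handle the antecedent $\forall\gamma<\delta(\zeta\vdash\theta_\gamma)$ inside $\mathsf{EA}+\zeta$ in essentially the same way: externalize the $\Sigma^0_1$ facts coming from $\zeta\vdash\theta_\delta$, internalize them by provable $\Sigma^0_1$-completeness, and unwind via the provably uniform Tarski biconditionals for the $\Pi^0_3$ family $(\theta_\gamma)$. The paper's display $(\star)$ packages precisely this step.

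The genuine difference is in how conjunct~(b) is verified. The paper splits into a successor case and a limit case. In the successor case it argues by contraposition: assuming $\neg\mathsf{Con}^{\beta+1}(\zeta\wedge\neg f(\zeta))$, it derives $\zeta\wedge\mathsf{Con}^\beta(\zeta)\vdash f(\zeta)$ and concludes via hypothesis~(ii) that $[\zeta\wedge\mathsf{Con}^\beta(\zeta)]=[\bot]$. In the limit case it genuinely invokes the inductive hypothesis that $\theta_{\gamma+1}$ is \emph{true} for each $\gamma<\beta$ and applies it to $\zeta$. You instead give a single uniform argument: to see $\mathsf{Con}^\beta(\zeta\wedge\neg f(\zeta))$, you verify, slice by slice, that $\mathsf{EA}+\zeta+\neg f(\zeta)+\mathsf{Con}^\delta(\zeta\wedge\neg f(\zeta))$ is consistent for each $\delta<\beta$, first getting $\mathsf{EA}+\zeta+\mathsf{Con}^\delta(\zeta)+\neg f(\zeta)$ consistent directly from hypothesis~(ii) at level $\delta<\alpha$, and then transferring consistency across the $\mathsf{EA}$-provable implication $\zeta\vdash(\mathsf{Con}^\delta(\zeta)\rightarrow\mathsf{Con}^\delta(\zeta\wedge\neg f(\zeta)))$, obtained from $\zeta\vdash\theta_\delta$. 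Two things are worth highlighting about your route. First, it eliminates the case split: successor and limit are treated the same way, which is arguably cleaner. Second, and more interestingly, your argument for conjunct~(b) nowhere uses the inductive hypothesis that the $\theta_\gamma$ are true; it uses only the \emph{syntactic} fact $\zeta\vdash\theta_\gamma$, together with hypothesis~(ii). This reveals that the transfinite induction is carrying only conjunct~(a), a point that the paper's presentation leaves implicit. The one place to be a bit more careful than you were in the write-up is the parenthetical claim that ``$\mathsf{EA}\vdash\forall\gamma<\delta(\theta_\delta\rightarrow\theta_\gamma)$'' is $\Sigma^0_1$: what is $\Sigma^0_1$ (and what you actually use) is the single arithmetized provability statement, not the quantified scheme, and the step from $\mathsf{True}_{\Pi_3}(\theta_\gamma)$ to the coded sentence $\theta_\gamma$ inside the provability predicate is exactly where the uniform Tarski biconditionals do their work---you flag this, but it deserves to be stated as a precise provable fact rather than hand-waved.
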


\begin{proof}
Let $f$ be as in the statement of the lemma. We prove the claim by induction on $\beta\leq \alpha$. The \textbf{base case} $\beta=0$ is trivial.\\

For the \textbf{successor case} we assume that $\beta<\alpha$ and that $\theta_\beta$ is true; we want to show that $\theta_{\beta+1}$ is true. So let $\zeta$ be a sentence such that $\zeta\vdash \theta_\beta$. We want to show that $\mathsf{Con}^{\beta+1}(\zeta)$ implies $\mathsf{Con}^{\beta+1}(\zeta\wedge\neg f(\zeta))$. We prove the contrapositive, that $\neg \mathsf{Con}^{\beta+1}(\zeta\wedge\neg f(\zeta))$ implies $\neg \mathsf{Con}^{\beta+1}(\zeta)$.  So suppose $\neg \mathsf{Con}^{\beta+1}(\zeta\wedge\neg f(\zeta))$, i.e., 
\begin{equation} 
\label{eq:dagger} 
\tag{$\dagger$} 
\zeta\wedge\neg f(\zeta)\vdash\neg \mathsf{Con}^\beta(\zeta\wedge\neg f(\zeta)).
\end{equation} 
We reason as follows.

Since $\zeta\vdash\theta_\beta$, $\zeta\vdash\forall\gamma<\beta, \mathsf{True}_{\Pi_3}(\theta_{\gamma})$. From this we infer
\begin{equation} 
\label{eq:star} 
\tag{$\star$} 
\zeta\vdash \mathsf{Pr}(\zeta\rightarrow \forall\gamma<\beta, \mathsf{True}_{\Pi_3}(\theta_{\gamma}))
\end{equation} 
by $\Sigma^0_1$ completeness. Moreover, since $\zeta\vdash\theta_\beta$,
\begin{flalign*}
\zeta&\vdash \forall\varphi \big{(}\big{(}\forall\gamma<\beta \mathsf{Pr}(\varphi\rightarrow\theta_\gamma)\big{)} \rightarrow \big{(}\mathsf{Con}^\beta(\varphi) \rightarrow \mathsf{Con}^\beta(\varphi\wedge\neg f(\varphi)) \big{)} \big{)} \textrm{ by the definition of $\theta_\beta$.} \\
&\vdash \forall\gamma<\beta \mathsf{Pr} \big{(}\zeta\rightarrow\theta_\gamma\big{)} \rightarrow \big{(}\mathsf{Con}^{\beta}(\zeta) \rightarrow \mathsf{Con}^{\beta}(\zeta\wedge\neg f(\zeta))\big{)} \textrm { by instantiation.} \\
&\vdash \mathsf{Con}^{\beta}(\zeta)\rightarrow \mathsf{Con}^{\beta}(\zeta\wedge\neg f(\zeta)) \textrm{ by \eqref{eq:star}.}\\
\zeta\wedge\neg f(\zeta)&\vdash \neg \mathsf{Con}^\beta(\zeta\wedge\neg f(\zeta))\textrm{ by \eqref{eq:dagger}.}\\
&\vdash \neg \mathsf{Con}^\beta(\zeta)\textrm{ by logic.}\\
\zeta&\vdash \mathsf{Con}^\beta(\zeta)\rightarrow f(\zeta) \textrm{ by logic.}
\end{flalign*}
Thus, $(\zeta\wedge\mathsf{Con}^\beta(\zeta))\vdash f(\zeta)$. Since $f(\varphi)$ always strictly implies $\varphi\wedge\mathsf{Con}^\beta(\varphi)$, we infer that 
$$[\zeta\wedge\mathsf{Con}^\beta(\zeta)]=[\bot].$$ 
This is to say that $\neg \mathsf{Con}^{\beta+1}(\zeta)$.\\

For the \textbf{limit case} we let $\beta$ be a limit ordinal and assume that for every $\gamma<\beta$, $\theta_\gamma$ is true. We want to show that $\theta_\beta$ is true. Let $\zeta$ be a sentence such that for every $\gamma<\beta$, $\zeta\vdash\theta_\gamma$. We want to show that $\mathsf{Con}^\beta(\zeta)$ implies $\mathsf{Con}^\beta(\zeta\wedge\neg f(\zeta))$. So assume that $\mathsf{Con}^\beta(\zeta)$, i.e., for every $\gamma<\beta, \mathsf{Con}^\gamma(\zeta)$. Let $\gamma<\beta$. Since $\beta$ is a limit ordinal, $\gamma+1<\beta$. So by the inductive hypothesis $\theta_{\gamma+1}$ is true. That is, by the definition of $\theta_{\gamma+1}$,
$$\forall\varphi\big{(}\mathsf{Pr}(\varphi\rightarrow\theta_\gamma)\rightarrow (\mathsf{Con}^\gamma(\varphi)\rightarrow \mathsf{Con}^\gamma(\varphi\wedge\neg f(\varphi)))\big{)}.$$ 
By instantiation, we infer that 
$$\mathsf{Pr}(\zeta\rightarrow\theta_\gamma)\rightarrow ( \mathsf{Con}^\gamma(\zeta)\rightarrow \mathsf{Con}^\gamma(\zeta\wedge\neg f(\zeta))).$$ 
Since $\zeta\vdash\theta_\gamma$ and $\mathsf{Con}^\gamma(\zeta)$, this means that $\mathsf{Con}^\gamma(\zeta\wedge\neg f(\zeta))$. Since $\gamma$ was a generic ordinal less than $\beta$, we get that $$\forall\gamma<\beta, \mathsf{Con}^\gamma(\zeta\wedge\neg f(\zeta)),$$ i.e., $\mathsf{Con}^\beta(\zeta)$. This completes the proof of the lemma.
\end{proof}

\begin{theorem}[Restatement of Theorem \ref{generalization of aaa}]
Let $f$ be monotonic. Suppose that for all $\varphi$, \\
(i) $\varphi\wedge \mathsf{Con}^\alpha(\varphi)$ implies $f(\varphi)$,\\
(ii) if $[f(\varphi)]\neq[\bot]$, then $f(\varphi)$ strictly implies $\varphi\wedge \mathsf{Con}^\beta(\varphi)$ for all $\beta<\alpha.$ \\
Then for every true $\chi$, there is a true $\psi$ such that $\psi\vdash\chi$ and $[f(\psi)] = [\psi\wedge \mathsf{Con}^\alpha(\psi)].$
\end{theorem}

\begin{proof}
Let $\chi$ be a true sentence. By the lemma, $\theta_\alpha$ is true. So $$\varphi:=\chi\wedge\theta_\alpha$$ is true. We let $$\psi:=\varphi\wedge(f(\varphi)\rightarrow \mathsf{Con}^\alpha(\varphi)).$$ Note that $\psi\vdash\chi$. We now show that $[\psi\wedge \mathsf{Con}^\alpha(\psi)]=[f(\psi)]$.

\begin{claim}
$f(\psi)\vdash (\psi\wedge f(\varphi))$.
\end{claim}

Since $f$ is monotonic.

\begin{claim}
$(\psi\wedge f(\varphi))\vdash (\varphi\wedge \mathsf{Con}^\alpha(\varphi))$.
\end{claim}

By the definition of $\psi$.

\begin{claim}
$(\varphi\wedge \mathsf{Con}^\alpha(\varphi))\vdash (\psi\wedge \mathsf{Con}^\alpha(\psi))$.
\end{claim}

It is clear from the definition of $\psi$ that $(\varphi\wedge \mathsf{Con}^\alpha(\varphi))\vdash \psi$. So it suffices to show that $(\varphi\wedge \mathsf{Con}^\alpha(\varphi))\vdash \mathsf{Con}^\alpha(\psi)$.
\begin{flalign*}
\varphi\wedge \mathsf{Con}^\alpha(\varphi)&\vdash \forall\zeta\big{(}\big{(}\forall\beta<\alpha \mathsf{Pr}(\zeta \rightarrow \theta_\beta)\big{)} \rightarrow \big{(}\mathsf{Con}^\alpha(\zeta)\rightarrow \mathsf{Con}^\alpha(\zeta\wedge\neg f(\zeta))\big{)}\big{)}\textrm{ by choice of $\varphi$.}\\
&\vdash \forall\beta<\alpha \mathsf{Pr}(\varphi\rightarrow \theta_\beta) \rightarrow \big{(}\mathsf{Con}^\alpha(\varphi)\rightarrow \mathsf{Con}^\alpha(\varphi\wedge\neg f(\varphi))\big{)}\textrm{ by instantiation.}\\
&\vdash \forall\beta<\alpha \mathsf{Pr}(\varphi\rightarrow\theta_\beta)\rightarrow \mathsf{Con}^\alpha(\varphi\wedge\neg f(\varphi))\textrm{ by logic.}
\end{flalign*}
Since $\mathsf{Con}^\alpha(\varphi\wedge\neg f(\varphi))\vdash \mathsf{Con}^\alpha(\psi)$, to prove the desired claim it suffices to show that $$\varphi\wedge \mathsf{Con}^\alpha(\varphi)\vdash \forall\beta<\alpha \mathsf{Pr}(\varphi \rightarrow \theta_\beta).$$ 
We reason as follows.
\begin{flalign*}
\varphi&\vdash \theta_\alpha \textrm{ by choice of $\varphi$.}\\
&\vdash \forall\beta<\alpha(\mathsf{True}_{\Pi_3}\theta_\beta) \textrm{ by definition of $\theta_\alpha$.}\\
&\vdash \mathsf{Pr}(\varphi \rightarrow \forall\beta<\alpha(\mathsf{True}_{\Pi_3}\theta_\beta)) \textrm{ by $\Sigma^0_1$ completeness.}\\
&\vdash \forall\beta<\alpha \mathsf{Pr}(\varphi \rightarrow \mathsf{True}_{\Pi_3} \theta_\beta)\\
&\vdash\forall\beta<\alpha \mathsf{Pr}(\varphi\rightarrow \theta_\beta)
\end{flalign*}
It is immediate from the preceding claims that $f(\psi)\vdash \psi\wedge \mathsf{Con}^\alpha(\psi)$. By assumption, $\psi + \mathsf{Con}^\alpha(\psi) \vdash f(\psi)$, so it follows that $[f(\psi)]=[\psi\wedge \mathsf{Con}^\alpha(\psi)]$.
\end{proof}

\begin{corollary}[Restatement of Corollary \ref{generalization of no push down}]
There is no monotonic $f$ such that for all $\varphi$, if $[\varphi\wedge\mathsf{Con}^\alpha(\varphi)]\neq[\bot]$, then both \\
(i) $\varphi\wedge \mathsf{Con}^\alpha(\varphi)$ strictly implies $f(\varphi)$ and\\
(ii) $f(\varphi)$ strictly implies $\varphi\wedge \mathsf{Con}^\beta(\varphi)$ for all $\beta<\alpha$.
\end{corollary}

\section{Finite iterates of $\mathsf{Con}$ are inevitable}

In this section and the next section we prove that the iterates of $\mathsf{Con}$ are, in a sense, inevitable. First we show that, for every natural number $n$, if a monotonic function $f$ is always bounded by $\mathsf{Con}^n$, then it is somewhere equivalent to $\mathsf{Con}^k$ for some $k\leq n$. In \textsection{5}, we turn to generalizations of this result into the effective transfinite.

\begin{theorem}[Restatement of Theorem \ref{finite version of zzz}]
\label{ttt}
Let $n\in\mathbb{N}$. Let $f$ be a monotonic function such that for every $\varphi$, \\
(i) $\varphi\wedge \mathsf{Con}^n(\varphi)$ implies $f(\varphi)$ and \\
(ii) $f(\varphi)$ implies $\varphi$. \\
Then for some $\varphi$ and some $k\leq n$, $[f(\varphi)]=[\varphi\wedge \mathsf{Con}^k(\varphi)]\neq[\bot].$
\end{theorem}

\begin{proof}
We suppose, towards a contradiction, that there is no $\psi$ and no $k\leq n$ such that $[f(\psi)]=[\psi\wedge\mathsf{Con}^k(\psi)]\neq[\bot]$. We then let $\varphi_1$ be a true statement such that 
\begin{flalign*}
&\varphi_1\vdash\forall\zeta(\mathsf{Con}(\zeta)\rightarrow \mathsf{Con}(\zeta\wedge\neg f(\zeta)))\\
&\varphi_1 \vdash\forall k\forall\zeta\big{(}\mathsf{Con}^{k+1}(\zeta)\rightarrow\neg \mathsf{Pr}\big{(}(\zeta\wedge \mathsf{Con}^k(\zeta))\leftrightarrow f(\zeta)\big{)}\big{)}.
\end{flalign*}
The first condition is that $\varphi_1$ proves that for every consistent $\varphi$, $f(\varphi)$ strictly implies $\varphi$. The second condition is that $\varphi_1$ proves that $f(\zeta)$ never coincides with $\zeta\wedge\mathsf{Con}^k(\zeta)$, unless $[\zeta\wedge\mathsf{Con}^k(\zeta)]=[\bot]$.

We define a sequence of statements, starting with $\varphi_1$, as follows: $$\varphi_{k+1}:=\varphi_k\wedge(f(\varphi_k)\rightarrow \mathsf{Con}^k(\varphi_k)).$$ We will use our assumption to show that, for all $k$, $\varphi_k\wedge \mathsf{Con}^k(\varphi_k)\vdash \mathsf{Con}^{k}(\varphi_{k+1})$. From this we will deduce that $[f(\varphi_{n+1})]=[\varphi_{n+1}\wedge\mathsf{Con}^n(\varphi_{n+1})]\neq[\bot]$, contradicting the assumption that $f$ and $\mathsf{Con}^n$ never coincide. Most of the work is contained in the proof of the following lemma.

\begin{lemma}
\label{finite}
For all $k$, for all $j\geq k$, $\big(\varphi_k\wedge \mathsf{Con}^k(\varphi_k)\big) \vdash \mathsf{Con}^k(\varphi_{j})$.
\end{lemma}

\begin{proof}
We prove the claim by a double induction. The primary induction is on $k$. For the \textbf{base case} $k=1$, we prove the claim by induction on $j$. The \emph{base case} $j=1$ follows trivially. For the \emph{inductive step} we assume that $\big(\varphi_1\wedge \mathsf{Con}(\varphi_1)\big)\vdash \mathsf{Con}(\varphi_{j})$ and show that $\big(\varphi_1\wedge \mathsf{Con}(\varphi_1)\big)\vdash \mathsf{Con}(\varphi_{j+1})$.
\begin{flalign*}
\varphi_1\wedge \mathsf{Con}(\varphi_1)&\vdash\forall\zeta(\mathsf{Con}(\zeta)\rightarrow \mathsf{Con}(\zeta\wedge\neg f(\zeta)))\textrm{ by choice of $\varphi_1.$}\\
&\vdash \mathsf{Con}(\varphi_j)\rightarrow \mathsf{Con}(\varphi_j\wedge\neg f(\varphi_j)) \textrm{ by instantiation.}\\
\varphi_1\wedge \mathsf{Con}(\varphi_1)&\vdash \mathsf{Con}(\varphi_j)\textrm{ by the inductive hypothesis.}\\
&\vdash \mathsf{Con}(\varphi_j\wedge\neg f(\varphi_j)) \textrm{ by logic.}\\
&\vdash \mathsf{Con}(\varphi_{j+1}) \textrm{ by definition of $\varphi_{j+1}$.}
\end{flalign*}
For the \textbf{inductive step} we assume that the claim is true of $k-1$, i.e., 
$$\forall j\geq k-1\big{(}\big{(}\varphi_{k-1}\wedge \mathsf{Con}^{k-1}(\varphi_{k-1})\big{)}\vdash\big{(}\mathsf{Con}^{k-1}(\varphi_{j})\big{)}\big{)}.$$ 
We prove the claim for $k$. Once again, we prove the claim by induction on $j$. The \emph{base case} $j=k$ follows trivially. For the \emph{inductive step} we assume that $\varphi_k\wedge \mathsf{Con}^k(\varphi_k)\vdash \mathsf{Con}^k(\varphi_j)$. We want to prove that $\varphi_k\wedge \mathsf{Con}^k(\varphi_k)\vdash \mathsf{Con}^k(\varphi_{j+1})$.
\begin{flalign*}
\varphi_k\wedge \mathsf{Con}^k(\varphi_k)&\vdash \forall x\forall\zeta\big{(}\mathsf{Con}^{x+1}(\zeta)\rightarrow\neg \mathsf{Pr}\big{(}(\zeta\wedge \mathsf{Con}^x(\zeta))\leftrightarrow f(\zeta)\big{)}\big{)} \textrm{ by choice of $\varphi_1$.}\\
&\vdash \mathsf{Con}^k(\varphi_j)\rightarrow\neg \mathsf{Pr}\big{(}(\varphi_j\wedge \mathsf{Con}^{k-1}(\varphi_j))\leftrightarrow f(\varphi_j)\big{)} \textrm{ by instantiation.}\\
\varphi_k\wedge \mathsf{Con}^k(\varphi_k)&\vdash \mathsf{Con}^k(\varphi_j) \textrm{ by the inner inductive hypothesis.}\\
&\vdash \neg \mathsf{Pr}\big{(}(\varphi_j\wedge \mathsf{Con}^{k-1}(\varphi_j))\leftrightarrow f(\varphi_j)\big{)} \textrm{ by logic.}
\end{flalign*}
Thus, $\varphi_k\wedge \mathsf{Con}^k(\varphi_k)$ proves that one of the following cases holds.
$$(\varphi_j\wedge \mathsf{Con}^{k-1}(\varphi_j))\nvdash f(\varphi_j) $$ 
$$f(\varphi_j)\nvdash(\varphi_j\wedge \mathsf{Con}^{k-1}\varphi_j)$$ 
We now show that $\varphi_k\wedge \mathsf{Con}^k(\varphi_k)$ refutes the second option.

\begin{claim}
$\varphi_k\wedge \mathsf{Con}^k(\varphi_k)\vdash \mathsf{Pr}\big{(}f(\varphi_j)\rightarrow(\varphi_j\wedge \mathsf{Con}^{k-1}\varphi_j)\big{)}$.
\end{claim}

By the outer inductive hypothesis, $\mathsf{EA}$ proves the following conditional: 
$$\theta:=\big{(}(\varphi_{j-1}\wedge \mathsf{Con}^{k-1}(\varphi_{j-1}))\rightarrow(\mathsf{Con}^{k-1}(\varphi_{j}))\big{)}.$$ 
Thus, $f(\varphi_j)$ (which contains $\mathsf{EA}$) also proves $\theta$.  We now show that $f(\varphi_j)\vdash \mathsf{Con}^{k-1}(\varphi_{j})$.
\begin{flalign*}
f(\varphi_j)&\vdash \varphi_j\wedge f(\varphi_{j-1}) \textrm{ since $f$ is monotonic.}\\
&\vdash (\varphi_{j-1}\wedge(f(\varphi_{j-1})\rightarrow \mathsf{Con}^{j-1}(\varphi_{j-1})))\wedge f(\varphi_{j-1}) \textrm{ by the definition of $\varphi_j.$}\\
&\vdash \varphi_{j-1}\wedge \mathsf{Con}^{j-1}(\varphi_{j-1}) \textrm{ by logic.}\\
&\vdash \varphi_{j-1}\wedge \mathsf{Con}^{k-1}(\varphi_{j-1}) \textrm{ since $j\geq k$.}\\
&\vdash \mathsf{Con}^{k-1}(\varphi_{j}) \textrm{ since $f(\varphi_j)$ proves $\theta$.}
\end{flalign*}
By $\Sigma^0_1$ completeness, $(\varphi_k\wedge \mathsf{Con}^k(\varphi_k))\vdash \mathsf{Pr}\big{(}f(\varphi_j) \rightarrow \mathsf{Con}^{k-1}(\varphi_{j})\big{)}$.

\begin{claim}
$(\varphi_k\wedge \mathsf{Con}^k(\varphi_k)) \vdash \mathsf{Con}^k (\varphi_{j+1}).$
\end{claim}

We reason as follows.
\begin{flalign*}
(\varphi_k\wedge \mathsf{Con}^k(\varphi_k))&\vdash \neg\mathsf{Pr}\big((\varphi_j \wedge \mathsf{Con}^{k-1}(\varphi_j))\rightarrow f(\varphi_j)\big) \textrm{ by the previous claim.} \\
&\vdash \mathsf{Con}(\varphi_j\wedge\neg f(\varphi_j)\wedge \mathsf{Con}^{k-1}(\varphi_j)). \\
&\vdash \mathsf{Con}(\varphi_{j+1}\wedge \mathsf{Con}^{k-1}(\varphi_j)) \textrm{ by the definition of $\varphi_{j+1}.$}\\
&\vdash \mathsf{Con}(\varphi_{j+1}\wedge \mathsf{Con}^{k-1}(\varphi_{j+1})) \textrm{ by the outer inductive hypothesis.}\\
&\vdash \mathsf{Con}^k(\varphi_{j+1}) \textrm{ by definition of $\mathsf{Con}^k$.}
\end{flalign*}
This concludes the proof of the lemma.
\end{proof}

As an instance of the lemma, we get that $(\varphi_{n}\wedge \mathsf{Con}^{n}(\varphi_{n})) \vdash \mathsf{Con}^{n}(\varphi_{n+1})$. We reason as follows.
\begin{flalign*}
f(\varphi_{n+1})&\vdash \varphi_{n}\wedge (f(\varphi_{n})\rightarrow \mathsf{Con}^n(\varphi_n))\textrm{ by the definition of $\varphi_{n+1}$.}\\
f(\varphi_{n+1})&\vdash f(\varphi_{n})\textrm{ since $f$ is monotonic.}\\
&\vdash \mathsf{Con}^n(\varphi_n)\textrm{ by logic.}\\
&\vdash \mathsf{Con}^{n}(\varphi_{n+1})\textrm{ by the lemma.}
\end{flalign*}
On the other hand, $\varphi_{n+1}\wedge\mathsf{Con}^{n}(\varphi_{n+1})\vdash f(\varphi_{n+1})$ since $f$ is everywhere bounded by $\mathsf{Con}^n$. Thus, $[f(\varphi_{n+1})]=[\varphi_{n+1}\wedge\mathsf{Con}^{n}(\varphi_{n+1})]$, contradicting the assumption that there is no $\psi$ and no $k\leq n$ such that $[f(\psi)]=[\psi\wedge\mathsf{Con}^k(\psi)]\neq[\bot]$. 
\end{proof}

\section{Transfinite iterates of $\mathsf{Con}$ are inevitable.}

Generalizing the proof of Theorem \ref{ttt} into the transfinite poses the following difficulty. Recall that the proof of Theorem \ref{ttt} makes use of a sequence of sentences starting with $\varphi_0:=\top$ where $$\varphi_{k+1}:=\varphi_k\wedge(f(\varphi_k)\rightarrow \mathsf{Con}^k(\varphi_k)).$$ It is not clear what the $\omega$th sentence in the sequence should be. A natural idea is that for a limit ordinal $\lambda$ the corresponding ``limit sentence'' should quantify over the sentences in the sequence beneath it and express, roughly, 
$$\forall \gamma<\lambda\big{(}\mathsf{True}(\varphi_\gamma) \wedge (\mathsf{True}(f(\varphi_\gamma))\rightarrow \mathsf{Con}^{\gamma}(\varphi_\gamma))\big{)}.$$ 
However, if the sentences in the sequence $(\varphi_\gamma)_{\gamma<\lambda}$ have unbounded syntactic complexity, then we are not guaranteed to have a truth-predicate with which we can quantify over them.

Nevertheless, we show that Theorem \ref{ttt} generalizes into the transfinite given an additional assumption on complexity. Note that $\varphi\mapsto(\varphi\wedge\mathsf{Con}(\varphi))$ can be factored into two functions---the identity and $\varphi\mapsto\mathsf{Con}(\varphi)$---the latter of which always produces a $\Pi^0_1$ sentence. For the rest of this section, we will focus on monotonic functions $\varphi\mapsto\varphi\wedge f(\varphi)$ where $f$ is monotonic and also $f(\varphi)\in\Pi^0_1$ for all $\varphi$.

\begin{definition}
A function $f$ is \emph{$\Pi^0_1$} if $f(\varphi)\in\Pi^0_1$ for all $\varphi$.
\end{definition}

For the next theorem we fix an elementary presentation $\Gamma$ of a recursive well-ordering. In the statement of the theorem and throughout the proof $\alpha$, $\beta$, $\gamma$, $\delta$, etc. are names of ordinals from the notation system $\Gamma$. 

\begin{theorem}[Restatement of Theorem \ref{zzz}]
\label{main}
Let $f$ be a monotonic $\Pi^0_1$ function. Then either \\
(i) for some $\beta\leq\alpha$ and some $\varphi$, $[\varphi\wedge f(\varphi)]=[\varphi\wedge \mathsf{Con}^\beta(\varphi)]\neq[\bot]$ or\\
(ii) for some $\varphi$, $(\varphi\wedge\mathsf{Con}^\alpha(\varphi))\nvdash f(\varphi)$.
\end{theorem}

\begin{proof}
Let $f$ be a monotonic $\Pi^0_1$ function such that for every $\varphi$, 
$$(\varphi\wedge\mathsf{Con}^\alpha(\varphi))\vdash (\varphi\wedge f(\varphi)).$$ 
We assume, for the sake of contradiction, that there is no sentence $\zeta$ and no $\beta\leq\alpha$ such that $[\zeta\wedge\mathsf{Con}^\beta(\zeta)]=[\zeta\wedge f(\zeta)]\neq[\bot]$. We then let $\varphi$ be the conjunction of the following four sentences.
$$\forall\zeta(\mathsf{Con}(\zeta)\rightarrow \mathsf{Con}(\zeta\wedge\neg f(\zeta)))$$
$$\forall \beta\leq\alpha \forall\zeta \big{(}\mathsf{Con}^{\beta}(\zeta)\rightarrow \forall\delta<\beta, \neg \mathsf{Pr}\big{(}(\zeta\wedge \mathsf{Con}^\delta(\zeta))\leftrightarrow (\zeta \wedge f(\zeta))\big{)}\big{)}$$
$$\forall\zeta\forall\eta( \mathsf{Pr}(\zeta\rightarrow\eta) \rightarrow \mathsf{Pr}(f(\zeta)\rightarrow f(\eta)) )$$
$$\forall x \big{(}\mathsf{Pr}(\mathsf{True}_{\Pi^0_2}(x))\rightarrow \mathsf{True}_{\Pi^0_2}(x)\big{)}$$

The first expresses that for every consistent $\varphi$, $f(\varphi)$ strictly implies $\varphi$. The second sentence expresses that if $\beta<\alpha$, then $f(\zeta)$ and $\zeta\wedge\mathsf{Con}^\beta(\zeta)$ never coincide, unless $[\zeta\wedge\mathsf{Con}^\beta(\zeta)]=[\bot]$ . The third sentence expresses the monotonicity of $f$. The fourth sentence expresses the $\Pi^0_2$ soundness of $\mathsf{EA}$. Note that each of these sentences is true, so their conjunction $\varphi$ is also true. Each of the four sentences is $\Pi^0_2$, whence so is $\varphi$. 

We are interested in the following sequence $(\varphi_\beta)_{\beta\leq\Gamma}$. Note that the sentences in the sequence $(\varphi_\beta)_{\beta\leq\Gamma}$ all have complexity $\Pi^0_2$. Note moreover that since $\varphi_1$ is true, so is $\varphi_\beta$ for every $\beta$.
\begin{flalign*}
\varphi_1&:=\varphi.\\
\varphi_\gamma&:= \varphi_1\wedge \forall\delta<\gamma \big{(}\mathsf{True}_{\Pi_1}(f(\varphi_\delta))\rightarrow \mathsf{Con}^{\delta}(\varphi_\delta)\big{)} \textrm{ for $\gamma>1$.}
\end{flalign*}

Formally, we define the sequence $(\varphi_\beta)_{\beta\leq\Gamma}$ by G\"{o}del's fixed point lemma as in Definition \ref{sentence sequence}.

\begin{remark}
\label{linear}
We may assume that the ordinal notation system $\Gamma$ is provably linear in $\mathsf{EA}$. Thus, $\mathsf{EA}\vdash\forall\beta\leq\alpha,\forall\gamma<\beta(\mathsf{True}_{\Pi_2}(\varphi_\beta)\rightarrow\mathsf{True}_{\Pi_2}(\varphi_\gamma))$.
\end{remark}

Our goal is to show that 
$$[\varphi_{\alpha+1}\wedge \mathsf{Con}^\alpha(\varphi_{\alpha+1})]=[\varphi_{\alpha+1}\wedge f(\varphi_{\alpha+1})]$$
contradicting the assumption that $f$ and $\mathsf{Con}^\alpha$ never coincide. The main lemmas needed to prove this result are the following.

\begin{lemma}
\label{ggg} $\mathsf{EA}\vdash \forall \gamma\leq\alpha \mathsf{Pr} \big{(} (\varphi_\gamma\wedge\neg f(\varphi_\gamma)) \rightarrow \varphi_\alpha\big{)}.$
\end{lemma}

\begin{lemma}
\label{sss} 
$\mathsf{EA}\vdash\forall\beta\leq\alpha \forall\gamma\leq\beta \mathsf{Pr} \big{(}(\varphi_\beta \wedge \mathsf{Con}^\gamma(\varphi_\beta) )\rightarrow\mathsf{Con}^\gamma(\varphi_\beta \wedge\neg f(\varphi_\beta))\big{)}.$
\end{lemma}

Lemma \ref{ggg} is needed to derive Lemma \ref{sss}. We now show how we use Lemma \ref{sss} to derive Theorem \ref{main}. As an instance of Lemma \ref{sss}, letting $\alpha=\beta=\gamma$, we infer that
$$\mathsf{EA}\vdash \mathsf{Pr} \big{(}(\varphi_\alpha \wedge \mathsf{Con}^\alpha(\varphi_\alpha)) \rightarrow \mathsf{Con}^\alpha(\varphi_\alpha \wedge\neg f(\varphi_\alpha))\big{)}.$$
From the soundness of $\mathsf{EA}$, we infer that
\begin{equation}
\tag{$\mp$}
\varphi_\alpha + \mathsf{Con}^\alpha(\varphi_\alpha) \vdash \mathsf{Con}^\alpha(\varphi_\alpha\wedge\neg f(\varphi_\alpha)).
\end{equation}
We then reason as follows.
\begin{flalign*}
\varphi_{\alpha+1}&\vdash \varphi_{\alpha}\wedge (f(\varphi_{\alpha})\rightarrow \mathsf{Con}^\alpha(\varphi_\alpha))\textrm{ by the definition of $\varphi_{\alpha+1}$.}\\
 f(\varphi_{\alpha+1})&\vdash f(\varphi_{\alpha})\textrm{ since $f$ is monotonic.}\\
\varphi_{\alpha+1} + f(\varphi_{\alpha+1})&\vdash \varphi_\alpha \wedge \mathsf{Con}^\alpha(\varphi_\alpha)\textrm{ by logic.}\\
&\vdash \mathsf{Con}^{\alpha}(\varphi_{\alpha+1})\textrm{ by $\mp$.}
\end{flalign*}
On the other hand, $\varphi_{\alpha+1} + \mathsf{Con}^{\alpha}(\varphi_{\alpha+1})\vdash f(\varphi_{\alpha+1})$ since $f$ is everywhere bounded by $\mathsf{Con}^\alpha$. Since $\varphi_1$ is true, so too is $\varphi_{\alpha+1}$, whence we infer that 
$$[\varphi_{\alpha+1}\wedge\mathsf{Con}^{\alpha}(\varphi_{\alpha+1})]=[\varphi_{\alpha+1}\wedge f(\varphi_{\alpha+1})]\neq[\bot],$$ 
contradicting the claim that there is no sentence $\zeta$ and no $\beta\leq\alpha$ such that $[\zeta\wedge\mathsf{Con}^\beta(\zeta)]=[\zeta\wedge f(\zeta)]\neq[\bot]$.
\end{proof}

It remains to prove Lemma \ref{ggg} and Lemma \ref{sss}. We devote one subsection to each.

\subsection{Proof of Lemma \ref{ggg}}

In this subsection we prove Lemma \ref{ggg}. First we recall the statement of the lemma.

\begin{lemma}[Restatement of Lemma \ref{ggg}]
$$\mathsf{EA}\vdash \forall \gamma\leq\alpha \big{(} \mathsf{Pr}(\varphi_\gamma\wedge\neg f(\varphi_\gamma)) \rightarrow \varphi_\alpha\big{)}.$$
\end{lemma}

\begin{proof}

We reason in $\mathsf{EA}$. Let $\gamma\leq\alpha$. We assume that 
\begin{equation} 
\label{eq:eta}
\tag{$\eta$} 
\mathsf{True}_{\Pi_2}(\varphi_\gamma)\wedge\neg \mathsf{True}_{\Pi_1}(f(\varphi_\gamma)).
\end{equation}

We we want to derive $\varphi_\alpha$, i.e.
$$\varphi_1\wedge\forall \sigma<\alpha(\mathsf{True}_{\Pi_1}(f(\varphi_\sigma))\rightarrow \mathsf{Con}^{\sigma}(\varphi_\sigma)).$$ 

The first conjunct follows trivially from the assumption that $\mathsf{True}_{\Pi_2}(\varphi_\gamma)$. We now prove the second conjunct of $\varphi_\alpha$ in two parts, first for all $\sigma$ such that $\alpha>\sigma\geq \gamma$ and then for all $\sigma<\gamma$. 

$\alpha>\sigma\geq \gamma:$ From the assumption that $\mathsf{True}_{\Pi^0_2}(\varphi_\gamma)$ we infer that $\varphi_1$, whence we infer that $f$ is monotonic. Thus, for all $\delta\geq\gamma$, $f(\varphi_\delta)\vdash f(\varphi_\gamma)$, i.e., $\mathsf{EA}\vdash \big{(}f(\varphi_\delta)\rightarrow f(\varphi_\gamma)\big{)}$. From $\varphi_1$ we also infer that $\mathsf{EA}$ is $\Pi^0_2$ sound, and so we infer that for all $\delta\geq\gamma$, $\mathsf{True}_{\Pi_1}(f(\varphi_\delta))\rightarrow \mathsf{True}_{\Pi_1}(f(\varphi_\gamma))$. From the assumption that $\neg \mathsf{True}_{\Pi_1}(f(\varphi_\gamma))$ we then infer that for all $\delta\geq\gamma$, $\neg \mathsf{True}_{\Pi_1}(f(\varphi_\delta))$, whence for all $\delta\geq\gamma$, $\mathsf{True}_{\Pi_1}(f(\varphi_\delta))\rightarrow\mathsf{Con}^\delta(\varphi_\delta)$.

$\sigma< \gamma:$ By Remark \ref{linear}, \ref{eq:eta} implies that 
$$\forall \sigma<\gamma (\mathsf{True}_{\Pi_1}\big{(}f(\varphi_{\sigma}))\rightarrow \mathsf{Con}^{\sigma}(\varphi_{\sigma})\big{)}.$$ 

This completes the proof of Lemma \ref{ggg}.
\end{proof}

\subsection{Proof of Lemma \ref{sss}}

In this subsection we prove Lemma \ref{sss}. We recall the statement of Lemma \ref{sss}.

\begin{lemma}[Restatement of Lemma \ref{sss}]
$$\mathsf{EA}\vdash\forall\beta\leq\alpha \forall\gamma\leq\beta \mathsf{Pr} \big{(}\varphi_\beta +\mathsf{Con}^\gamma(\varphi_\beta) \rightarrow\mathsf{Con}^\gamma(\varphi_\beta \wedge\neg f(\varphi_\beta))\big{)}.$$
\end{lemma}

The proof of this lemma is importantly different from the proof of Lemma \ref{finite}. In particular, to push the induction through limit stages we need to know not only that the inductive hypothesis is true but also that it is provable in $\mathsf{EA}$. We resolve this issue by using Schmerl's technique of \emph{reflexive transfinite induction} (see Proposition \ref{RTI}).

In the proof of the lemma, we let $\mathcal{C}(\gamma,\delta)$ abbreviate the claim that 
$$\varphi_\delta+\mathsf{Con}^\gamma(\varphi_\delta)\vdash\mathsf{Con}^\gamma(\varphi_\delta \wedge \neg f(\varphi_\delta)).$$

\begin{proof}
We want to show that 
$$\mathsf{EA}\vdash\forall\beta\leq\alpha(\forall\gamma\leq\beta(\mathcal{C}(\gamma,\beta))).$$
By Proposition \ref{RTI} it suffices to show that 
$$\mathsf{EA}\vdash\forall\alpha(\mathsf{Pr}(\forall\beta\leq\alpha\forall\gamma\leq\beta\mathcal{C}(\gamma,\beta))\rightarrow\forall\gamma\leq\alpha\mathcal{C}(\gamma,\alpha)).\footnote{The reader might expect that we need to write ``$\beta<\alpha$'' instead of ``$\beta\leq\alpha$'' in the antecedent for this to match the statement of Proposition \ref{RTI}. However, it is clear from the proof of Proposition \ref{RTI} that this suffices.}$$
Thus, we \textbf{reason in $\mathsf{EA}$} and fix $\alpha$. We assume that
\begin{equation} 
\label{eq:bigtriangleup} 
\tag{$\bigtriangleup$}
\mathsf{Pr}(\forall\beta\leq\alpha, \forall\gamma\leq\beta, \mathcal{C}(\gamma,\beta)).
\end{equation}
We let $\gamma\leq\alpha$ and we want to show that $\mathcal{C}(\gamma,\alpha)$.

Since $\varphi_\alpha\vdash\varphi$ we infer that 
\begin{equation} 
\label{eq:sharp} 
\tag{$\sharp$}
\varphi_\alpha+\mathsf{Con}^\gamma(\varphi_\alpha)\vdash\forall\delta<\gamma, \neg \mathsf{Pr}\big{(}(\varphi_{\alpha}\wedge\mathsf{Con}^\delta(\varphi_\alpha)) \leftrightarrow (\varphi_\alpha\wedge f(\varphi_\alpha))\big{)}.
\end{equation}

We first note that both
\begin{flalign*}
\varphi_\alpha&\vdash \forall\delta<\gamma (\mathsf{True}_{\Pi_1}(f(\varphi_{\delta}))\rightarrow \mathsf{Con}^{\delta}(\varphi_{\delta})) \textrm{ by the definition of $\varphi_\alpha$ and also}\\
\varphi_\alpha + f(\varphi_\alpha)&\vdash\forall\delta<\gamma \mathsf{Pr}(f(\varphi_\alpha) \rightarrow f(\varphi_\delta)) \textrm{ since $\varphi_1$ proves the monotonicity of $f$.}\\
& \vdash \forall\delta<\gamma (f(\varphi_\alpha)\rightarrow \mathsf{True}_{\Pi_1}(f(\varphi_\delta))) \textrm{ since $\varphi_1$ proves the $\Pi^0_2$ soundness of $\mathsf{EA}$.}\\
&\vdash \forall\delta<\gamma, \mathsf{True}_{\Pi_1}(f(\varphi_{\delta})) \textrm{ by logic.}
\end{flalign*}
Thus, we may reason as follows.
\begin{flalign*}
\varphi_\alpha+f(\varphi_\alpha)&\vdash \forall\delta<\gamma, \mathsf{Con}^\delta(\varphi_\delta)\\
&\vdash \forall\delta<\gamma, \mathsf{Con}^{\delta}(\varphi_{\delta}\wedge\neg f(\varphi_\delta))) \textrm{ since \eqref{eq:bigtriangleup} delivers $\mathcal{C}(\delta,\delta)$.}\\
&\vdash \forall\delta<\gamma, \mathsf{Con}^{\delta}(\varphi_\alpha) \textrm{ by Lemma \ref{ggg}.}
\end{flalign*}
Thus, by $\Sigma^0_1$ completeness,
$$\mathsf{EA}\vdash\forall\delta<\gamma \mathsf{Pr} \big{(} (\varphi_\alpha\wedge f(\varphi_\alpha)) \rightarrow \mathsf{Con}^\delta(\varphi_\alpha)\big{)}.$$
Combined with \eqref{eq:sharp}, this delivers
\begin{flalign*}
\varphi_\alpha+\mathsf{Con}^\gamma(\varphi_\alpha)&\vdash\forall\delta<\gamma \neg \mathsf{Pr} \big{(} (\varphi_{\alpha} \wedge \mathsf{Con}^\delta(\varphi_\alpha)) \rightarrow f(\varphi_\alpha)\big{)}.\\
&\vdash\forall\delta<\gamma, \mathsf{Con}(\varphi_{\alpha} \wedge \neg f(\varphi_\alpha) \wedge\mathsf{Con}^\delta(\varphi_\alpha)).\\
&\vdash\forall\delta<\gamma, \mathsf{Con}\big{(}\varphi_{\alpha} \wedge \neg f(\varphi_\alpha) \wedge\mathsf{Con}^\delta(\varphi_\alpha\wedge\neg f(\varphi_\alpha))\big{)} \textrm{ since \eqref{eq:bigtriangleup} delivers $\mathcal{C}(\delta,\alpha)$.}\\
&\vdash \mathsf{Con}^\gamma(\varphi_\alpha\wedge\neg f(\varphi_\alpha)).
\end{flalign*}
This completes the proof of Lemma \ref{sss}.
\end{proof}

Theorem \ref{main} shows the inevitability of the consistency operator. For a sufficiently constrained monotonic function $f$, $f$ must coincide with an iterate of $\mathsf{Con}$ on some non-trivial sentence. However, it is not clear from the proofs of Theorem \ref{ttt} or Theorem \ref{main} that $f$ must coincide with $\mathsf{Con}$ on a \emph{true} sentence.

\begin{question}
Let $f$ be a monotonic $\Pi^0_1$ function. Suppose that for every $\varphi$, $$(\varphi\wedge\mathsf{Con}^\alpha(\varphi))\vdash f(\varphi).$$ Must there be some $\beta\leq\alpha$ and some \textbf{true} $\varphi$ such that $$[\varphi\wedge f(\varphi)]=[\varphi\wedge \mathsf{Con}^\beta(\varphi)]?$$
\end{question}

\section{1-consistency and iterated consistency}

Just as the $\Pi^0_1$ fragments of natural theories can often be approximated by iterated consistency statements, the $\Pi^0_2$ fragments of natural theories can often be approximated by iterated $1$-consistency statements. A theory $T$ is \emph{1-consistent} if $T+\mathrm{Th}_{\Pi^0_1}(\mathbb{N})$ is consistent. The $1$-consistency of $\mathsf{EA}+\varphi$ can be expressed by the following $\Pi^0_{2}$ sentence, $1\mathsf{Con}(\varphi)$: $$\forall x (\mathsf{True}_{\Pi^0_1}(x)\rightarrow \mathsf{Con}(\varphi\wedge\mathsf{True}_{\Pi^0_1}(x))).$$

In this section, we investigate the relationship between $1$-consistency and iterated consistency. First, we show that $1\mathsf{Con}$ majorizes every iterate of $\mathsf{Con}^\alpha$.

\begin{proposition}[Restatement of Proposition \ref{majorizing prop}]
\label{xxx}
For any elementary presentation $\alpha$ of a recursive well ordering, there is a true sentence $\varphi$ such that for every $\psi$, if $\psi\vdash\varphi$, then $(\psi\wedge 1\mathsf{Con}(\psi))$ implies $(\psi\wedge \mathsf{Con}^\alpha(\psi))$. Moreover, if $[\psi\wedge \mathsf{Con}^\alpha(\psi)]\neq[\bot]$ then $(\psi\wedge 1\mathsf{Con}(\psi))$ strictly implies $(\psi\wedge \mathsf{Con}^\alpha(\psi))$.
\end{proposition}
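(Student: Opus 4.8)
The plan is to absorb the one thing $\mathsf{EA}$ genuinely lacks---transfinite induction along the arbitrary recursive presentation $\Gamma$---into a single true sentence $\varphi$, and then let $1\mathsf{Con}$ do everything else. Concretely, I would take
$$\varphi\ :=\ \forall\psi\big(1\mathsf{Con}(\psi)\rightarrow\forall\beta\,\mathsf{Con}^\beta(\psi)\big),$$
where $\beta$ ranges over the field of $\Gamma$. The first step is to check that $\varphi$ is \emph{true}. Fix a code $\psi$ with $\mathsf{EA}+\psi$ $1$-consistent and suppose, toward a contradiction, that $\{\beta:\mathsf{Con}^\beta(\psi)\text{ is false}\}$ is nonempty; by Definition \ref{elementary}(i) this set has a $<$-least element $\beta_0$, and necessarily $\beta_0\neq 0$. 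By minimality $\mathsf{Con}^\gamma(\psi)$ is true for all $\gamma<\beta_0$, so since $\mathsf{Con}^{\beta_0}(\psi)$ unfolds (Definition \ref{iterates}) to $\forall\gamma<\beta_0\,\mathsf{Con}(\psi\wedge\mathsf{Con}^\gamma(\psi))$ and is false, there is $\gamma<\beta_0$ with $\mathsf{EA}+\psi\vdash\neg\mathsf{Con}^\gamma(\psi)$. But $\mathsf{Con}^\gamma(\psi)$ is a true $\Pi^0_1$ sentence, so $1$-consistency of $\mathsf{EA}+\psi$ gives that $\mathsf{EA}+\psi+\mathsf{Con}^\gamma(\psi)$ is consistent, a contradiction. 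Hence $\varphi$ is true.

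Alongside this I would record two routine facts. First, each $\mathsf{Con}^\gamma(\psi)=\mathbf{Con}^\star(\psi,\gamma)$ is, as a formula in $\gamma$ and $\psi$, $\Pi^0_1$, so $\mathsf{EA}$ proves $\mathsf{Con}^\gamma(\psi)\leftrightarrow\mathsf{True}_{\Pi^0_1}(c(\gamma,\psi))$ for an elementary term $c(\gamma,\psi)$ returning its code. Second, $\mathsf{EA}$ proves that $1\mathsf{Con}(\psi)$ together with $\mathsf{Con}^\gamma(\psi)$ yields $\mathsf{Con}(\psi\wedge\mathsf{Con}^\gamma(\psi))$: reasoning in $\mathsf{EA}$, from $\mathsf{Con}^\gamma(\psi)$ we get $\mathsf{True}_{\Pi^0_1}(c(\gamma,\psi))$, instantiating $1\mathsf{Con}(\psi)$ at $c(\gamma,\psi)$ gives $\mathsf{Con}(\psi\wedge\mathsf{True}_{\Pi^0_1}(c(\gamma,\psi)))$, and this is provably equivalent to $\mathsf{Con}(\psi\wedge\mathsf{Con}^\gamma(\psi))$ since $\mathsf{EA}$ proves that consistency respects provable equivalence.

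The implication is now immediate: if $\psi\vdash\varphi$ then, working in $\mathsf{EA}+\psi+1\mathsf{Con}(\psi)$, instantiating $\varphi$ at the code of $\psi$ and detaching $1\mathsf{Con}(\psi)$ yields $\forall\beta\,\mathsf{Con}^\beta(\psi)$, hence $\mathsf{Con}^\alpha(\psi)$; so $(\psi\wedge 1\mathsf{Con}(\psi))$ implies $(\psi\wedge\mathsf{Con}^\alpha(\psi))$. For the ``moreover'' clause, assume also $[\psi\wedge\mathsf{Con}^\alpha(\psi)]\neq[\bot]$ and suppose, toward a contradiction, that $(\psi\wedge\mathsf{Con}^\alpha(\psi))\vdash 1\mathsf{Con}(\psi)$. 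From what was just shown, $\mathsf{EA}+\psi+1\mathsf{Con}(\psi)\vdash\mathsf{Con}^\alpha(\psi)$, and combining with the second routine fact at $\gamma=\alpha$ gives $\mathsf{EA}+\psi+1\mathsf{Con}(\psi)\vdash\mathsf{Con}(\psi\wedge\mathsf{Con}^\alpha(\psi))$. Feeding in the assumed implication, $\mathsf{EA}+(\psi\wedge\mathsf{Con}^\alpha(\psi))\vdash\mathsf{Con}(\psi\wedge\mathsf{Con}^\alpha(\psi))$, so by G\"odel's second incompleteness theorem $[\psi\wedge\mathsf{Con}^\alpha(\psi)]=[\bot]$, a contradiction. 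Hence $(\psi\wedge\mathsf{Con}^\alpha(\psi))\nvdash 1\mathsf{Con}(\psi)$, and since $(\psi\wedge\mathsf{Con}^\alpha(\psi))\vdash\psi$ trivially, $(\psi\wedge 1\mathsf{Con}(\psi))$ strictly implies $(\psi\wedge\mathsf{Con}^\alpha(\psi))$.

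I do not expect a serious obstacle. The conceptual content is just that $1$-consistency automatically propagates through conjunction with true $\Pi^0_1$ sentences while the iterated-consistency statements $\mathsf{Con}^\beta(\psi)$ are uniformly $\Pi^0_1$; once transfinite induction along $\Gamma$ is loaded into the true sentence $\varphi$, everything else follows. The one bookkeeping point worth attention is the coherence between the fixed formula $\varphi$ and the sentences ``$\mathsf{Con}^\alpha(\psi)$'' of Definition \ref{iterates}, i.e.\ verifying that instantiating $\varphi$ at $\psi$ really produces $\mathbf{Con}^\star(\psi,\beta)$ rather than merely something equivalent to it, and that the uniform versions of the two routine $\mathsf{EA}$-facts hold as stated.
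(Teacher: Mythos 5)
Your proof is correct, but it takes a genuinely different route from the paper's. The paper chooses $\varphi$ to be any true sentence proving $\mathsf{TI}^{\alpha}_{\Pi^0_1}$ (transfinite induction along $\alpha$ for $\Pi^0_1$ predicates), and then runs a formalized transfinite induction \emph{inside} $\mathsf{EA}+\varphi+1\mathsf{Con}(\varphi)$ to derive $\mathsf{Con}^{\alpha+1}(\varphi)$; the induction splits into base, successor, and limit cases, with $1\mathsf{Con}$ doing the work only at successors. You instead hard-code the desired implication into $\varphi$ itself as a single arithmetical sentence $\forall\psi\big(1\mathsf{Con}(\psi)\rightarrow\forall\beta\,\mathsf{Con}^\beta(\psi)\big)$, verify its truth by an \emph{external} (meta-level) least-element argument using the genuine well-orderedness of the presentation, and then reduce the claim to a single instantiation step. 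In effect, the paper internalizes the transfinite induction and you externalize it. Your version is shorter and avoids the case split because the ``$\forall\gamma<\beta$'' shape of Definition \ref{iterates} absorbs it; the paper's version is more informative, since it isolates a natural and independently meaningful hypothesis ($\varphi\vdash\mathsf{TI}^\alpha_{\Pi^0_1}$) under which the implication $1\mathsf{Con}\Rightarrow\mathsf{Con}^\alpha$ propagates below $\varphi$, rather than using a sentence tailored to assert the conclusion.

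Two bookkeeping points worth tightening. First, as you anticipated, passing from $\forall\beta\,\mathsf{Con}^\beta(\psi)$ (with $\beta$ ranging over the field) to $\mathsf{Con}^\alpha(\psi)$ is not literally an instance of universal instantiation if $\alpha$ sits ``above'' the field; the cleanest fix is to define $\varphi:=\forall\psi(1\mathsf{Con}(\psi)\rightarrow\mathsf{Con}^\alpha(\psi))$ directly, or extend the presentation by one point as the paper effectively does by proving $\mathsf{Con}^{\alpha+1}(\varphi)$. Second, your ``routine fact'' that $\mathsf{EA}\vdash\mathsf{Con}^\gamma(\psi)\leftrightarrow\mathsf{True}_{\Pi^0_1}(c(\gamma,\psi))$ uniformly in $\gamma,\psi$ is exactly the move the paper also makes when substituting $\mathsf{Con}^\beta(\varphi)$ for $x$ in $1\mathsf{Con}(\varphi)$, so this is not an additional liability, but it does deserve the explicit observation that $\mathbf{Con}^\star$ is given by a single $\Pi^0_1$ formula in its parameters, so the partial truth predicate commutes with it provably in $\mathsf{EA}$. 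The strictness argument you give is the same Gödel-two computation the paper leaves implicit.
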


\begin{proof} 

Let $\alpha$ be an elementary presentation of a recursive well-ordering. Let $\varphi$ be a true sentence such that $\varphi\vdash \mathsf{TI}^{\alpha}_{\Pi^0_1}$, i.e., $\varphi$ implies the validity of transfinite induction along $\alpha$ for ${\Pi^0_1}$ predicates. We prove that 
$$(\varphi\wedge 1\mathsf{Con}(\varphi))\vdash \mathsf{Con}^{\alpha+1}(\varphi).$$

Since $\varphi\wedge 1\mathsf{Con}(\varphi)\vdash  \mathsf{TI}^{\alpha}_{\Pi^0_1}$, it suffices to show that:\\
\textbf{Base case:} $(\varphi\wedge 1\mathsf{Con}(\varphi))\vdash \mathsf{Con}(\varphi)$\\
\textbf{Successor case:} $(\varphi\wedge 1\mathsf{Con}(\varphi))\vdash \forall\beta<\alpha( \mathsf{Con}^\beta(\varphi)\rightarrow\mathsf{Con}^{\beta+1}(\varphi))$\\
\textbf{Limit case:} $(\varphi\wedge 1\mathsf{Con}(\varphi))\vdash \forall\lambda\Big{(}lim(\lambda)\rightarrow \big{(}(\forall\beta<\lambda\mathsf{Con}^\beta(\varphi))\rightarrow\mathsf{Con}^\lambda(\varphi)\big{)}\Big{)}$

The \textbf{base case} and the \textbf{limit case} are both trivial. For the \textbf{successor case} we first note that by the definition of $1\mathsf{Con}(\varphi)$,
$$1\mathsf{Con}(\varphi)\vdash \forall x (\mathsf{True}_{\Pi^0_1}(x)\rightarrow \mathsf{Con}(\varphi\wedge\mathsf{True}_{\Pi^0_1}(x))),$$
and so by substituting $\mathsf{Con}^\beta(\varphi)$ in for $x$,
\begin{equation}
 \label{eq:oplus} 
 \tag{$\oplus$}
1\mathsf{Con}(\varphi)\vdash \mathsf{True}_{\Pi^0_1}(\mathsf{Con}^\beta(\varphi))\rightarrow \mathsf{Con}(\varphi\wedge\mathsf{True}_{\Pi^0_1}(\mathsf{Con}^\beta(\varphi))).
\end{equation}
Thus, we reason as follows.
\begin{flalign*}
1\mathsf{Con}(\varphi)\vdash\mathsf{Con}^\beta(\varphi)&\rightarrow \mathsf{Con}(\varphi\wedge\mathsf{True}_{\Pi^0_1}(\mathsf{Con}^\beta(\varphi))) \textrm{ by \eqref{eq:oplus}.}\\
&\rightarrow \mathsf{Con}(\varphi\wedge \mathsf{Con}^\beta(\varphi)).\\
&\rightarrow \mathsf{Con}^{\beta+1}(\varphi) \textrm{ by the definition of $\mathsf{Con}^{\beta+1}$.}
\end{flalign*}
It is clear that the implication $\varphi\wedge\mathsf{1Con}(\varphi)\vdash \varphi\wedge \mathsf{Con}^\alpha(\varphi)$ is strict as long as $[\varphi\wedge \mathsf{Con}^\alpha(\varphi)]\neq[\bot]$. This completes the proof of the proposition.
\end{proof}

In light of the previous proposition, one might conjecture that $1\mathsf{Con}$ is the weakest monotonic function majorizing every function of the form $\mathsf{Con}^\alpha$ for some recursive well-ordering $\alpha$ on true sentences. However, this is not so. To demonstrate this, we use a recursive linear order that has no hyperarithmetic infinite descending sequences. Harrison \cite{harrison1968recursive} introduced such an ordering with order-type $\omega_1^{CK}\times(1+\mathbb{Q})$; see also Feferman and Spector \cite{feferman1962incompleteness} who consider such orderings in the context of iterated reflection principles. We use a presentation $\mathcal{H}$ of Harrison's ordering such satisfying the conditions explicated in Definition \ref{elementary}. We note that since $\mathcal{H}$ has no hyperarithmetic descending sequences, transfinite induction along $\mathcal{H}$ for $\Pi^0_1$ properties is valid. Our idea is to produce a function stronger than each $\mathsf{Con}^\alpha$ but weaker than $1\mathsf{Con}$ by iterating $\mathsf{Con}$ along the Harrison linear order. 

\begin{theorem}[Restatement of Theorem \ref{consequence of majorizing prop}]
There are infinitely many monotonic functions $f$ such that for every recursive ordinal $\alpha$, there is an elementary presentation $a$ of $\alpha$ such that $f$ majorizes $\mathsf{Con}^a$ on a true ideal but also $1\mathsf{Con}$ majorizes $f$ on a true ideal.
\end{theorem}

\begin{proof}
In Definition \ref{iterates}, we used G\"{o}del's fixed point lemma to produce iterates of $\mathsf{Con}$ along an elementary well-ordering. We similarly use G\"{o}del's fixed point lemma to define sentences $\mathbf{Con}^\star(\varphi,\beta)$ for $\beta\in\mathcal{H}$ as follows.
$$\mathsf{EA}\vdash \mathbf{Con}^\star(\varphi,\beta)\leftrightarrow \forall\gamma<_\mathcal{H}\beta, \mathsf{Con}(\varphi\wedge\mathbf{Con}^\star(\varphi,\gamma)).$$
We use the notation $\mathsf{Con}^\beta(\varphi)$ for $\mathbf{Con}^\star(\varphi,\beta)$.
Recall that we are assuming that it is elementarily calculable whether an element of $\mathcal{H}$ is zero or a successor or a limit. Thus, the following clauses are provable in $\mathsf{EA}$.
\begin{itemize}
\item $\mathsf{Con}^0(\varphi) \leftrightarrow \top$
\item $\mathsf{Con}^{\gamma+1}(\varphi) \leftrightarrow \mathsf{Con}(\varphi\wedge\mathsf{Con}^\gamma(\varphi))$
\item $\mathsf{Con}^{\lambda}(\varphi) \leftrightarrow \forall\gamma<_\mathcal{H}\lambda, \mathsf{Con}^{\gamma}(\varphi)$ for $\lambda$ a limit.
\end{itemize}

\begin{claim}
For $\gamma\in \mathcal{H}$, the function $\varphi\mapsto {Con}^\gamma(\varphi)$ is monotonic.
\end{claim}

This follows immediately from Proposition \ref{Mono}. Note that in the statement of Lemma \ref{RTI} we assume only that $<$ is an elementary \emph{linear} ordering, not a well-ordering. 

\begin{claim}
There are infinitely many monotonic functions $f$ such that for every recursive well-ordering $\alpha$, there is an elementary presentation $a$ of $\alpha$ such that $f$ majorizes $\mathsf{Con}^a$ on true sentences.
\end{claim}

If $x<_\mathcal{H}y$ then $\mathsf{Con}^y(\varphi)$ strictly implies $\mathsf{Con}^x(\varphi)$ for every $\varphi$ such that $\mathsf{Con}^x(\varphi)\neq[\bot]$. Given the order type of $\mathcal{H}$, this means that for infinitely many $\gamma$, for every recursive well-ordering $\alpha$, $\mathsf{Con}^\gamma$ majorizes $\mathsf{Con}^a$ where $a$ represents $\alpha$ in $\mathcal{H}$.

\begin{claim}
$1\mathsf{Con}$ majorizes $\mathsf{Con}^a$ on true sentences for each $a\in\mathcal{H}$.
\end{claim}
Since every $\Pi^0_1$ definable subset of $\omega$ has an $\mathcal{H}$-least element, the sentence $\mathsf{TI}^{\mathcal{H}}_{\Pi^0_1}$, which expresses the validity of transfinite induction along $\mathcal{H}$ for ${\Pi^0_1}$ predicates, is true. But then if $\varphi\vdash \mathsf{TI}^{\mathcal{H}}_{\Pi^0_1}$, then for any $\gamma\in\mathcal{H}$, $(\varphi\wedge 1\mathsf{Con}(\varphi))$ strictly implies $(\varphi\wedge\mathsf{Con}^\gamma(\varphi))$ as long as $[(\varphi\wedge\mathsf{Con}^\gamma(\varphi))]\neq[\bot]$, as in Proposition \ref{xxx}.
\end{proof}

\section{An unbounded recursively enumerable set that contains no true ideals}

In this section we prove a limitative result. Theorem \ref{first} demonstrates that if $f$ is monotonic and that for all consistent $\varphi$, (i) $\varphi\wedge \mathsf{Con}(\varphi)$ implies $f(\varphi)$ and (ii) $f(\varphi)$ strictly implies $\varphi$, then for cofinally many true $\varphi$, $[f(\varphi)] = [\varphi\wedge \mathsf{Con}(\varphi)].$ It is natural to conjecture that cofinal equivalence with $\mathsf{Con}$ be strengthened to equivalence to $\mathsf{Con}$ \emph{in the limit}, i.e., on a true ideal. One strategy to strengthen Theorem \ref{first} in this way would be to show that every recursively enumerable set that contains arbitrarily strong true sentences and that is closed under provable equivalence contains a true ideal.

We now show that the aforementioned strategy fails. To this end, we define a recursively enumerable set $\mathcal{A}$ that contains arbitrarily strong true sentences and that is closed under provable equivalence but does not contain any true ideals. We are grateful to Matthew Harrison-Trainor for simplifying the proof of the following proposition.

\begin{proposition}[Restatement of Proposition \ref{limitative result}]
\label{eee}
There is a recursively enumerable set $\mathcal{A}$ that contains arbitrarily strong true sentences and that is closed under $\mathsf{EA}$ provable equivalence but does not contain any true ideals.
\end{proposition}

\begin{proof}
Let $\{\varphi_0,\varphi_1,...\}$ be an effective G\"{o}del numbering of the language of arithmetic. We describe the construction of $\mathcal{A}$ in stages. During a stage $n$ we may \emph{activate} a sentence $\psi$, in which case we say that $\psi$ is \emph{active} until it is \emph{deactivated} at some later stage $n+k$. After describing the construction of $\mathcal{A}$ we verify that $A$ has the desired properties.

\textbf{Stage 0:} Numerate $\varphi_0$ and $\neg\varphi_0$ into $\mathcal{A}$. Activate the sentences $(\varphi_0\wedge \mathsf{Con}(\varphi_0))$ and $(\neg\varphi_0\wedge \mathsf{Con}(\neg\varphi_0))$.

\textbf{Stage n+1:} There are finitely many active sentences. For each such sentence $\psi$, numerate $\theta_0:=(\psi\wedge\varphi_{n+1})$ and $\theta_1:=(\psi\wedge\neg\varphi_{n+1})$ into $\mathcal{A}$. Deactivate the sentence $\psi$ and activate the sentences $(\theta_0\wedge \mathsf{Con}(\theta_0))$ and $(\theta_1\wedge \mathsf{Con}(\theta_1))$.

We dovetail the construction with a search through $\mathsf{EA}$ proofs. If we ever see that $\mathsf{EA}\vdash\varphi\leftrightarrow\psi$ for some $\varphi$ that we have already numerated into $\mathcal{A}$, then we numerate $\psi$ into $\mathcal{A}$.

Now we check that $\mathcal{A}$ has the desired properties. It is clear that $\mathcal{A}$ is recursively enumerable and that $\mathcal{A}$ is closed under $\mathsf{EA}$ provable equivalence.

\begin{claim}
$\mathcal{A}$ contains arbitrarily strong true sentences. That is, for each true sentence $\varphi$, there is a true sentence $\psi$ such that $\psi\vdash\varphi$ and $\psi\in \mathcal{A}$.
\end{claim}

At any stage in the construction of $\mathcal{A}$, there are finitely many active sentences, $\psi_0$, ..., $\psi_k$. An easy induction shows that exactly one of $\psi_0,...,\psi_k$ is true. Indeed, exactly one of $\varphi_0$ or $\neg\varphi_0$ is true, and hence so is exactly one of $\varphi_0\wedge \mathsf{Con}(\varphi_0)$ and $\neg\varphi_0\wedge \mathsf{Con}(\neg\varphi_0)$. And if $\theta$ is true, then so is exactly one of $\zeta_0:=\theta\wedge\varphi_k$ and $\zeta_1:=\theta\wedge\neg\varphi_k$, and hence so too is exactly one of $\zeta_0\wedge \mathsf{Con}(\zeta_0)$ and $\zeta_1\wedge \mathsf{Con}(\zeta_1)$.

Let $\varphi_k$ be a true sentence. At stage $k$ in the construction of $\mathcal{A}$ there are only finitely many active sentences $\psi_0,...,\psi_n$. We have already seen that exactly one of $\psi_i$ is true. But then $\varphi_k\wedge\psi_i$ is true, $(\varphi_k\wedge\psi_i\vdash\varphi_k)$, and $(\varphi_k\wedge\psi_i)$ is numerated into $\mathcal{A}$.

\begin{claim}
$\mathcal{A}$ contains no true ideals.
\end{claim}

An easy induction shows that if $\psi_0$ and $\psi_1$ are both active at the same stage, then for any $\theta$, if $\theta$ implies both $\psi_0$ and $\psi_1$ then $\theta\in[\bot]$.

Let $\varphi$ be a true sentence in $\mathcal{A}$. By the previous remark, the only sentences in $\mathcal{A}$ that strictly imply $\varphi$ are (i) $\mathsf{EA}$ refutable sentences and (ii) sentences that imply $\varphi\wedge \mathsf{Con}(\varphi)$. Since the Lindenbaum algebra of $\mathsf{EA}$ is dense, this means there is some $\psi$ such that $(\varphi\wedge \mathsf{Con}(\varphi))$ strictly implies $\psi$ strictly implies $\varphi$ but $\psi\notin \mathcal{A}$. 
\end{proof}

The following questions remain.

\begin{question}
Is the relation of cofinal agreement on true sentences an equivalence relation on recursive monotonic operators?
\end{question}

\begin{question}
Let $f$ be recursive and monotonic. Suppose that for all consistent $\varphi$,\\
(i) $\varphi\wedge \mathsf{Con}(\varphi)$ implies $f(\varphi)$ and\\
(ii) $f(\varphi)$ implies $\varphi$. \\
Must $f$ be equivalent to the identity or to $\mathsf{Con}$ on a true ideal?
\end{question}

  \bibliographystyle{plain}
  \bibliography{consistency.bib}

\end{document}